\documentclass[12pt,a4paper,english]{amsart}
\usepackage[english,french]{babel}
\usepackage{amssymb,xspace}
\usepackage{amstext}
\usepackage{smfthm}
\theoremstyle{plain}
\usepackage{amsbsy,amssymb,amsfonts,latexsym}
\usepackage[dvips]{graphicx}
\usepackage{epsfig}
\usepackage[all]{xy}
\marginparwidth=10 true mm
\oddsidemargin=2.5 true mm
\evensidemargin=2.5 true mm
\marginparsep=5 true mm
\topmargin=0 true mm
\headheight=10 true mm
\headsep=10 true mm
\topskip=0 true mm
\footskip=15 true mm

\setlength{\textwidth}{160 true mm}
\setlength{\textheight}{215 true mm}
\setlength{\hoffset}{-1 true mm}
\setlength{\voffset}{2 true mm}

\parindent=0 true mm

\overfullrule=0 pt
\raggedbottom

\newcommand{\ds }{\ensuremath{\displaystyle}}
\newcommand{\R }{\ensuremath{\mathbb R}}
\newcommand{\C }{\ensuremath{\mathbb C}}

\newcommand{\N }{\ensuremath{\mathbb N}}
\renewcommand{\P }{\ensuremath{\mathbb P}}

\font\calli=callig15
\def\gg{\hbox{\!\!\calli G}}
\def\ii{\mathcal{I}}

\newtheorem{theorem}{Theorem}[section]

\newtheorem{lemma}[theorem]{Lemma}

\newtheorem{proposition}[theorem]{Proposition}

{\theoremstyle{definition}}

{\theoremstyle{definition}}

{\theoremstyle{definition}}

{\theoremstyle{definition}}

{\theoremstyle{definition}}

\newenvironment*{absen}
{
\begin{center}
\begin{minipage}{13 cm}
\selectlanguage{english}
\footnotesize
\setlength{\baselineskip}{0.5\baselineskip}
\textsc{Abstract. ---}}
{\setlength{\baselineskip}{2\baselineskip}
\normalsize
\end{minipage}
\end{center}}

\newenvironment*{absfr}
{
\begin{center}
\begin{minipage}{13 cm}
\selectlanguage{french}
\footnotesize
\setlength{\baselineskip}{.5\baselineskip}
\textsc{R\' esum\' e. ---}}
{\setlength{\baselineskip}{2\baselineskip}
\normalsize
\end{minipage}
\end{center}}
\email{julien.grivaux@free.fr\ }
\subjclass{53C55, 32M10}
\keywords{K\"{a}hler manifold,
Einstein-K\"{a}hler metric, first Chern class,
admissible functions,
Tian's invariant, grassmannian}

\title{Tian's invariant of the Grassmann Manifold}
\author{Julien Grivaux}

\address{
Universit\'{e} Pierre et Marie Curie}

\newcommand{\gm }{Grassmann manifold}
\newcommand{\gpqc }{\ensuremath{G_{p,q}(\C \, )}}
\newcommand{\prc }[1]{\ensuremath{\P ^{#1}(\C)}}
\newcommand{\prr }[1]{\ensuremath{\P ^{#1}(\R)}}

\newcommand{\mpq }{\ensuremath{M^{*}(p+q,p)}}
\newcommand{\upq }{\ensuremath{U(p+q)}}
\newcommand{\gpq }{\ensuremath{\gg_{p,q}}}
\newcommand{\dd }{\ensuremath{\partial\overline{\partial}}}
\newcommand{\ba }[1]{\ensuremath{\overline{\!#1}}}
\newcommand{\ic }{\ensuremath{I^{c}}}
\newcommand{\iti }{\ensuremath{\widetilde{\!I}}}
\newcommand{\itic }{\ensuremath{\tilde{I}^{c}}}
\newcommand{\hc }{\ensuremath{\breve{H}}}
\newcommand{\gde }[3]{\ensuremath{E_{#1,#2}^{#3}}}
\newcommand{\gdec }[3]{\ensuremath{\breve{E}_{#1,#2}^{#3}}}
\newcommand{\ma }[2]{\ensuremath{M_{#1,#2}(\C\, )}}

\newcommand{\mum }{\ensuremath{\mu _{_M}}}
\newcommand{\pei }{\ensuremath{P_{I}}}
\newcommand{\mm }[1]{\ensuremath{m_{{#1}}}}
\newcommand{\lga }{\ensuremath{\rightarrow}}

\newcommand{\ti }{\ensuremath{\tilde}}
\newcommand{\RR}{\ensuremath{\mathcal{R}}}
\newcommand{\tr}{\ensuremath{Tr}}
\newcommand{\cinf }{\ensuremath{C^{\infty }}}

\DeclareMathOperator{\id}{Id}

\begin{document}
\maketitle
\begin{absen}
We prove that Tian's invariant on the complex Grassmann
manifold $\gpqc$ is equal to $1/(p+q)$. The method
introduced here uses a Lie group of holomorphic isometries
which operates transitively on the considered manifolds and
a natural imbedding of $\bigl(\P^{1}(\C\, )\bigr)^{p}$ in
$\gpqc$.
\par
\end{absen}
\par\bigskip
\begin{absfr}
On prouve que l'invariant de Tian sur la grassmannienne $\gpqc$
est $1/(p+q)$. La m\'{e}thode pr\'{e}sent\'{e}e dans cet
article utilise un groupe de Lie d'isom\'{e}tries
holomorphes qui op\`{e}re transitivement sur les
vari\'{e}t\'{e}s consid\'{e}r\'{e}es ainsi qu'un plongement
naturel de $\bigl(\P^{1}(\C\, )\bigr)^{p}$ dans $\gpqc$.
\par
\end{absfr}
\selectlanguage{english}
\section{Introduction}
On a complex manifold, an hermitian metric $h$ is characterized  by the
\textit{1-1 symplectic} form $\omega $ defined
by $\omega =i\, g_{\lambda \, \ba{\mu }}
\, dz^{\lambda }\wedge d\, \ba{z}^{\mu }$, where $g_{\lambda \, \ba{\mu }}=
h_{\lambda \, \ba{\mu }}/2$.
\par
The metric is a \textit{K\"{a}hler metric}
if $\omega $ is closed, i. e. $d\omega =0$; then $M$ is a
\textit{K\"{a}hler manifold}.
\par
On a K\"{a}hler manifold, we can define
the \textit{Ricci form} by
$R =i\, R_{\lambda \, \ba{\mu }}
\, dz^{\lambda }\wedge d\, \ba{z}^{\mu }$, where
$R_{\lambda \, \ba{\mu }}=-\partial _{\lambda \, \ba{\mu  }}\log |g|$.
\par
A K\"{a}hler manifold is \textit{Einstein with factor $k$} if
$R=k\omega $. For instance,
choosing a local coordinate system $Z=(z_{1},\dots
,z_{m})$,
the projective space $\P_{m}(\C\, )$ with
the Fubini-Study metric $\omega =i\partial\,  \ba{\partial }\log \bigl(
1+||Z||^{2}\bigr)$ is Einstein with
factor $m+1$.
\par
On a K\"{a}hler manifold $M$, the \textit{first Chern class} $C^{1}(M)$
is the cohomology class of the Ricci tensor, that is the set of the
forms $R+i\partial \, \ba{\partial }\varphi $, where $\varphi $ is $\cinf$
on $M$. If there is a form in $C^{1}(M)$ which is positive (resp. negative,
zero), then $C^{1}(M)$ is \textit{positive} (resp. \textit{negative},
\textit{zero}).
If a K\"{a}hler manifold is Einstein, then $C^{1}(M)$ and $k$ are both
positive (resp. negative, zero). In the negative case, it was
proved by Aubin (\cite{Aub1}, see also \cite{Aub8}),
that there exists a unique Einstein-K\"{a}hler metric
(E.K. metric) on $M$ .
It is so for the zero case too (\cite{Aub1}, \cite{Ya}).
The question for the positive case is still
open: some manifolds, such as the complex projective space blown up at
one point, do not admit an  E.K. metric
(for obstructions, see \cite{Li} and \cite{Fu}). Aubin
\cite{Aub7} and Tian \cite{Ti} have shown that for
suitable values of holomorphic invariants of the metric, there exists an
E.K. metric on $M$.
\par
For $\omega /2\pi $ in $C^{1}(M)$, \textit{Tian's invariant}
$\alpha (M)$
is the supremum of
the set of the real numbers $\alpha $ satisfying the following:
there exists a constant $C$
such that the inequality $\ds\int_{M}{e^{-\alpha \varphi }}\leq C$
holds for all
the $\cinf$ functions $\varphi $ with $\omega
+i\partial \, \ba{\partial }\varphi >0$
and $\sup\varphi \geq 0$,
where $\omega =
i\, g_{\lambda \, \ba{\mu }}\, dz^{\lambda }\!\wedge d\, \ba{ z}^{\, \mu }$
is the metric form.
Such functions $\varphi $ are said
\textit{$\omega $-admissible}.
\par
In \cite{Ti},
Tian established  that if $\alpha (M)>m/(m+1)$, $m$
being the dimension of
$M$, there exists an E.K. metric\ on $M$. This condition is not
necessary: it does not hold on the projective space, where Tian's
invariant is $1/(m+1)$.
\par
In the same paper, Tian introduces a more restrictive
invariant $\alpha
_{G}(M)$,
considering only
the admissible functions $\varphi $ invariant by the
action of a compact group $G$ of holomorphic isometries. The sufficient
condition for the existence of an E.K. metric on $M$ remains
$\alpha _{G}(M)>m/(m+1)$; it is more easily satisfied if the group $G$
is rich enough.
\par
In many cases, the group $G$ is a non-discrete Lie group. The invariant
$\alpha _{G}(M)$ can be computed using subharmonic functions methods and
the maximum principle (for effective examples, see \cite{Be1},
\cite{Be2}, \cite{BeCh1}, \cite{BeCh2}, \cite{Re}).
\par
In this paper, we prove the following theorem:
\begin{theorem}\label{thIntroDeux}
Tian's invariant on $\gpqc$ is given by
$\alpha \left(\gpqc\right)=1/(p+q) $.
\end{theorem}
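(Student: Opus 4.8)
The plan is to prove the two inequalities $\alpha(\gpqc)\le 1/(p+q)$ and $\alpha(\gpqc)\ge 1/(p+q)$ separately, after fixing the correct normalisation of the metric. Since Tian's invariant is defined for a form $\omega$ with $\omega/2\pi\in C^{1}(\gpqc)$, I would first equip $\gpqc$ with its $\upq$-invariant Einstein--K\"ahler form $\omega$, for which $\omega/2\pi=C^{1}(\gpqc)=(p+q)\,\sigma_{1}$, where $\sigma_{1}$ is the ample generator of the Picard group (the Schubert, i.e. Pl\"ucker hyperplane, class); the coefficient $p+q$ is the Fano index of the manifold and is exactly what will produce the answer. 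Using $\gpqc\cong G_{q,p}(\C)$ I may assume $p\le q$, so that $\C^{p+q}$ contains $p$ pairwise orthogonal $2$-planes; sending a $p$-tuple of lines (one in each plane) to their direct sum gives the holomorphic, totally geodesic embedding $\iota\colon(\prc{1})^{p}\hookrightarrow\gpqc$ announced in the abstract, with $\iota^{*}\sigma_{1}=H_{1}+\dots+H_{p}$ and $\iota^{*}\omega=(p+q)\sum_{i}\omega_{i}$, the $\omega_{i}$ being the Fubini--Study forms of the factors.

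For the \emph{upper bound} I would exhibit a single degenerating family of admissible functions whose exponential integrals are not uniformly bounded once $\alpha>1/(p+q)$. Let $\sigma$ be one Pl\"ucker coordinate, a section of $\sigma_{1}$, and put $\varphi_{t}=(p+q)\log\bigl((|\sigma|^{2}+t\,\|Z\|^{2})/\|Z\|^{2}\bigr)$, where $\|Z\|^{2}$ is the sum of the squared moduli of all Pl\"ucker coordinates. Each $\varphi_{t}$ is smooth, and since $|\sigma|^{2}+t\|Z\|^{2}$ is a sum of squared moduli of holomorphic functions its logarithm is plurisubharmonic, so $\omega+i\dd\varphi_{t}=(p+q)\,i\dd\log(|\sigma|^{2}+t\|Z\|^{2})\ge 0$; moreover $|\sigma|^{2}\le\|Z\|^{2}$ gives $\sup\varphi_{t}=(p+q)\log(1+t)\ge 0$, so $\varphi_{t}$ is $\omega$-admissible. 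As $t\to 0$ one has $\varphi_{t}\to(p+q)\log(|\sigma|^{2}/\|Z\|^{2})$, which concentrates the whole K\"ahler mass on the Schubert divisor $\{\sigma=0\}$. Localising transversally to the smooth locus of that divisor, with transverse coordinate $w$, the integrand behaves like $|w|^{-2\alpha(p+q)}$, a codimension-one singularity that fails to be integrable for $\alpha\ge 1/(p+q)$; pulling everything back through $\iota$ reduces this to the elementary one-variable estimate for $(p+q)\log\bigl(|z_{1}|^{2}/(1+|z_{1}|^{2})\bigr)$ on a single $\prc{1}$. Hence no uniform constant $C$ can exist for $\alpha>1/(p+q)$, giving $\alpha(\gpqc)\le 1/(p+q)$.

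For the \emph{lower bound} I would show that for every $\alpha<1/(p+q)$ and every $\omega$-admissible $\varphi$ the integral $\int_{\gpqc}e^{-\alpha\varphi}$ is bounded by a constant depending only on $\alpha$ and $p+q$. Here I would exploit the transitivity of $\upq$ to replace the global estimate by a uniform local one near a fixed base point, together with the fact that the translated flats $g\cdot\iota\bigl((\prc{1})^{p}\bigr)$ sweep out $\gpqc$ and that the singularities of an $\omega$-plurisubharmonic function are entirely detected by its restrictions to these flats. Choosing a fibration of $\gpqc$ adapted to the group whose fibres are such flats, the integral splits into an iterated integral over the $\prc{1}$ factors; on each factor a one-variable $\omega_{i}$-plurisubharmonic function of mass $p+q$ with $\sup\le 0$ yields a finite exponential integral precisely when $\alpha(p+q)<1$, and the sharp one-dimensional bound (obtained by subharmonicity and the maximum principle, as in the works cited in the introduction) provides the required uniform constant.

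I expect the lower bound to be the main obstacle, and within it the genuinely delicate point is sharpness. The crux is to prove that the worst admissible singularity is the codimension-one concentration of multiplicity $p+q$ modelled on a single $\prc{1}$ factor, i.e. to control the full complex singularity exponent rather than merely the Lelong number: a Lelong-number bound together with Skoda's theorem only confines $\alpha(\gpqc)$ to the interval $[1/(p+q),\,pq/(p+q)]$ and cannot by itself pin down the value. It is exactly the homogeneity under $\upq$ and the explicit totally geodesic flat $(\prc{1})^{p}$ that make the reduction to the solved one-dimensional problem possible and force the constant to be $1/(p+q)$.
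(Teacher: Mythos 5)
Your upper bound is essentially correct and runs parallel to the paper's: both identify the extremal degeneration as a logarithmic pole of coefficient $p+q$ along the Schubert divisor $\{\det m_{I}=0\}$ (your Pl\"ucker section $\sigma$), and your transverse computation of the non-integrability of $|w|^{-2\alpha(p+q)}$ plays the role of the paper's decreasing sequence of admissible approximants (Lemma~\ref{SecTroisLemmeUn}) combined with the explicit divergence Lemma~\ref{SecTroisLemmeDeux}. Two repairs are needed: $\omega+i\,\dd\,\varphi_{t}$ is only semi-positive, so replace $\varphi_{t}$ by $(1-\epsilon)\varphi_{t}$; and the divergence must be established on $\gpqc$ itself by the transverse analysis near smooth points of the divisor --- pulling back through $\iota$ only shows that the integral over the flat diverges, which says nothing about the ambient integral.

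The genuine gap is in the lower bound, exactly where you locate the difficulty. Your plan --- restrict $\varphi$ to flats sweeping out $\gpqc$ and invoke the sharp bound on each factor --- cannot close as stated, because the one-dimensional (and product) Tian bound is uniform only over admissible functions normalized \emph{from below} ($\sup\varphi\geq 0$, or nonnegativity at a point); your hypothesis ``$\sup\leq 0$'' is the wrong direction (the constants $-M$ satisfy it and make $\int e^{-\alpha\varphi}$ arbitrarily large), and, more seriously, nothing guarantees that the restriction of a normalized $\varphi$ to a given flat inherits any normalization at all: a flat may lie entirely in the region where $\varphi$ is very negative, so the ``uniform constant'' over the family of flats does not exist for free. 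The paper supplies precisely the two missing mechanisms. First, Theorem~\ref{SecDeuxUnProp}: for a compact group of holomorphic isometries whose orbits of finitely many points have positive measure, one may replace $\sup\varphi\geq 0$ by $\varphi(P_{I})\geq 0$ for all $I$; its proof (Green's function estimates, $H^{1,1}$ bounds and Kondrakov compactness, a measure argument on the group) is a substantial part of the paper. Second, the sweeping family is not the set of $\upq$-translates of one totally geodesic flat, but the explicit affine family $\rho_{w}$, $w\in\C^{\,p(q-1)}$, of flats all passing through the \emph{single} point $P_{I}$, so that the normalization $\varphi(P_{I})\geq 0$ is inherited by every member; moreover for $w\neq 0$ these flats are not isometrically embedded: $\rho_{w}^{*}\gpq$ differs from $FS_{1}\oplus\dots\oplus FS_{1}$ by $i\,\dd\,\log\Phi$, so the function that is admissible on $\bigl\{\prc{1}\bigr\}^{p}$ is $\varphi\circ\rho_{w}+\log\Phi$, and one needs the quantitative estimates $1+|\mu_{k}|^{2}\leq F_{I}\circ\rho_{w}$ and $F_{I}\circ\rho_{w}\geq 1+\|w\|^{2}$ both to dominate this correction and to make the Fubini integral over $w$ converge after inserting the weight $F_{I}^{-\kappa}$ with $\kappa>p(q-1)$, a weight later removed by a covering argument. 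Together with Proposition~\ref{SecDeuxPropDeux} (whose proof also hinges on choosing a point where $\varphi\geq 0$), this is the actual content of the proof; your sketch offers no substitute for it, and the appeal to Lelong numbers and Skoda's theorem, as you yourself note, cannot bridge the gap.
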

This generalizes the known result on $\P^{m}(\C\,
)$ (\cite{Ti}, see also \cite{Aub6}). Let us also mention that
Tian's invariant has been computed on $\P^{m}(\C\,
)$ blown up at one point and on certain Fermat
hypersurfaces using H\"ormander $L^{2}$ estimates for
the $\overline{\partial }$-equation (\cite{Ti}).
\par
We first compute the volume element of the metric $\gpq$; then
we will establish some general preliminary results concerning Tian's
invariant as well as imbeddings of $\bigl\{\P^{1}(\C\,)\bigr\}^{p}$ in $\gpqc$ which allow
us to deduce $\alpha \left(\gpqc\right)$ from
$\alpha \left(\P^{1}(\C\,)\right)$.
\section{Basic properties of the Grassmann manifold}
We propose here a short survey of the properties of \gm\ (for more details,
see \cite{KoNo}).
We denote by $\gpqc$ the set of the subspaces of dimension $p$
in $\C^{\, p+q}$; in particular,
$G_{1,m}(\C \, )$ is the complex projective space of dimension $m$. It is
known
(see \cite{Aub6})
that on $\P_{m}(\C\, )$, the Fubini-Study metric is Einstein with factor $m+1$
and that Tian's invariant is $1/(m+1)$. Now, let $\mpq$ be the
set of the matrices of rank $p$ in $\ma{p+q}{p}$.
The group $Gl_{p}(\C\, )$ acts by
multiplication on the right on $\mpq$.
 More precisely
$\bigl(\mpq,\ \pi,\ \gpqc\bigr)$ is a principal fiber bundle with group
$Gl_{p}(\C\, )$. The group $Gl_{p+q}(\C\, )$ acts by multiplication
on the left on
$\mpq$ and induces an action on
$\gpqc$;
so does the unitary group $\upq$. These groups act transitively on
$\gpqc$, which shows that $\gpqc$ is compact.
\par
We denote by $\ii$ the set of all increasing-ordered subsets of
$p$ elements in
$\{1,\dots ,p+q\}$.
Let $P$ be an element of $\mpq$, $P=\bigl(p_{ij}\bigr)_{1\leq i\leq p+q
\atop 1\leq j\leq p}$.
By Cauchy-Binet formula we get:
$\det \bigl(\, ^{t}\!P\, \ba{P}\, \bigr)=
\sum _{I\in\ii}|\det m_{_I}(P)|^{2}$,
where $m_{I}(P)$ is the matrix
$\bigl(p_{ij }\bigr)_{i\in  I \atop 1\leq j\leq p}$.
The form $\omega $,
where
$\omega =i\, \dd \, \log \det \bigl(\, ^{t}\!P\, \ba{P}\,
\bigr)$,
is invariant by the action of $Gl_{p}
(\C\, )$ on $\mpq$, and so it projects onto a form $\gpq$.
The metric
$\gpq$ is a K\"{a}hler metric form on $\gpqc$.
For $p=1$,
this metric on $G_{1,m}(\C\, )$ is the Fubini-Study metric on the complex
projective space. The action of the unitary group $\upq$ on $\gpqc$
preserves the metric $\gpq$ so that $\upq$ is a group of
holomorphic isometries which operates transitively on $\gpqc$.
\par
For $I$ in $\mathcal{I}$, let
$U_{I}$ be the set of the matrices $P$ in $\mpq$ such that
$\det(m_{I}(P))$ is non-zero. Then
$\pi (U_{I})$ is
a coordinate open set on
$\gpqc$, the matrix $Z_{I}$ in $\ma{q}{p}$ is the coordinate,
the inverse of the chart $\varphi _{I}$ sends $\mpq$ onto $\pi (U_{I})$
and we have
$m_{I}\bigl(\varphi_{I} ^{-1}(Z_{I}) \bigr)=I^{(p)}$ where
$I^{(p)}$ is the $p\times p$ identity matrix, and
$m_{\ic}\bigl(\varphi _{I}^{-1}(Z_{I}) \bigr)=Z_{I}$.

\begin{lemma}\label{SecUnLemmeUn}
For $I$ in $\ii$, let $\lambda _{I}$ be the map
from $\pi (U_{I})$ to  $\R_{+}$
defined by $$\lambda _{I}(Z_{I})=\bigl|\, \det(\id+
^{t}Z_{I}\, \ba{Z}_{I})\,  \bigr|^{-(p+q)}.$$ Then $\bigl(\lambda _{I} \bigr)
_{I\in\ii}$ are the components of a maximal differential form
$\eta $ on $\gpqc$, namely:
$$\eta =\lambda _{I}\bigl(i/2 \bigr)^{pq}\,
\bigl(dZ
\!\wedge d\, \ba{Z}\bigr)_{I}.$$
\end{lemma}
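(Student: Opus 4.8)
The plan is to check that the locally defined forms $\eta_I:=\lambda_I\,(i/2)^{pq}(dZ\wedge d\ba{Z})_I$ agree on every overlap $\pi(U_I)\cap\pi(U_J)$, so that they glue to a single global form on $\gpqc$ (whose complex dimension is $pq$, so a maximal form has degree $2pq$). Each factor $dz_{kl}\wedge d\ba{z}_{kl}$ is a $2$-form, so the block $(dZ\wedge d\ba{Z})_I$ is insensitive to the ordering of the entries of $Z_I$, and under the holomorphic transition $Z_I\mapsto Z_J$ it transforms by $(dZ\wedge d\ba{Z})_J=\bigl|\det(\partial Z_J/\partial Z_I)\bigr|^{2}\,(dZ\wedge d\ba{Z})_I$, the Jacobian being the holomorphic one. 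Hence $\eta_I=\eta_J$ on the overlap reduces to the single scalar identity
\[ \lambda_I=\lambda_J\,\bigl|\det(\partial Z_J/\partial Z_I)\bigr|^{2}, \]
and the whole content of the lemma is this identity, which I would split into a computation of the ratio $\lambda_I/\lambda_J$ and a computation of the Jacobian.

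First I would compute $\lambda_I/\lambda_J$ from the principal-bundle structure. If $P$ is the representative with $m_I(P)=\id$ and $m_{I^{c}}(P)=Z_I$, then the $J$-normalized representative is $P'=P\,A$ with $A=m_J(P)^{-1}$, so that $m_J(P')=\id$ and $m_{J^{c}}(P')=Z_J$. Since ${}^{t}P'\,\ba{P}'={}^{t}A\,\bigl({}^{t}P\,\ba{P}\bigr)\,\ba{A}$ and ${}^{t}P\,\ba{P}=\id+{}^{t}Z_I\,\ba{Z}_I$, I get $\det(\id+{}^{t}Z_J\,\ba{Z}_J)=|\det A|^{2}\det(\id+{}^{t}Z_I\,\ba{Z}_I)=|\det m_J(P)|^{-2}\det(\id+{}^{t}Z_I\,\ba{Z}_I)$. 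Raising to the power $-(p+q)$ gives
\[ \lambda_J=\bigl|\det m_J(P)\bigr|^{2(p+q)}\,\lambda_I. \]

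The main obstacle is the Jacobian $\det(\partial Z_J/\partial Z_I)$. Here I would use that the transition map is a matrix Möbius transformation. After reordering the rows of $P$ by a permutation matrix $g=\left(\begin{smallmatrix}A&B\\C&D\end{smallmatrix}\right)\in\upq$, written in blocks of sizes $p$ and $q$, that brings the $J$-rows on top, one has $g\left(\begin{smallmatrix}\id\\Z_I\end{smallmatrix}\right)=\left(\begin{smallmatrix}A+BZ_I\\C+DZ_I\end{smallmatrix}\right)=\left(\begin{smallmatrix}m_J(P)\\m_{J^{c}}(P)\end{smallmatrix}\right)$, so that $A+BZ_I=m_J(P)$ and $Z_J=(C+DZ_I)(A+BZ_I)^{-1}$. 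Differentiating yields $dZ_J=(D-Z_J B)\,(dZ_I)\,(A+BZ_I)^{-1}$, a linear endomorphism $H\mapsto XHY$ of $\ma{q}{p}$ with $X=D-Z_J B$ and $Y=(A+BZ_I)^{-1}$, whose determinant is $\det(X)^{p}\det(Y)^{q}$. From $\left(\begin{smallmatrix}A+BZ_I&B\\C+DZ_I&D\end{smallmatrix}\right)=g\left(\begin{smallmatrix}\id&0\\Z_I&\id\end{smallmatrix}\right)$ together with the block factorization $\left(\begin{smallmatrix}A+BZ_I&B\\C+DZ_I&D\end{smallmatrix}\right)=\left(\begin{smallmatrix}\id&0\\Z_J&\id\end{smallmatrix}\right)\left(\begin{smallmatrix}A+BZ_I&B\\0&D-Z_J B\end{smallmatrix}\right)$, I obtain the Schur identity $\det g=\det(A+BZ_I)\,\det(D-Z_J B)$. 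Combining these gives $\det(\partial Z_J/\partial Z_I)=\det(g)^{p}\,\det(m_J(P))^{-(p+q)}$, and since $g$ is a permutation matrix $|\det g|=1$, whence $\bigl|\det(\partial Z_J/\partial Z_I)\bigr|^{2}=|\det m_J(P)|^{-2(p+q)}$.

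Finally, combining the two computations gives $\lambda_J\,\bigl|\det(\partial Z_J/\partial Z_I)\bigr|^{2}=|\det m_J(P)|^{2(p+q)}\lambda_I\cdot|\det m_J(P)|^{-2(p+q)}=\lambda_I$, which is exactly the required transformation law; hence $\eta_I=\eta_J$ on every overlap and the $\eta_I$ glue to a globally defined $2pq$-form $\eta$ on $\gpqc$. Because $(i/2)^{pq}(dZ\wedge d\ba{Z})_I$ is the standard positive volume element of $\ma{q}{p}\cong\C^{pq}$ and $\lambda_I>0$, the form $\eta$ is nowhere vanishing, i.e. a maximal form, as claimed. The only delicate point is the Jacobian computation of the third paragraph; everything else is bookkeeping with the $Gl_{p}(\C\,)$-action on frames.
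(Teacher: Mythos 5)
Your proof is correct, and its key computation takes a genuinely different route from the paper's, although both arguments share the same skeleton: reduce the lemma to the cocycle identity $\lambda_I=\lambda_J\,\bigl|\det(\partial Z_J/\partial Z_I)\bigr|^{2}$ on overlaps, compute the ratio of the $\lambda$'s from the bundle structure, and compute the Jacobian of the transition. For the ratio, you and the paper do the same thing: the paper's final display uses $P_I\,\alpha^{-1}=P_{\tilde I}$ with $\alpha=m_{\tilde I}(P_I)$ to get $\lambda_{\tilde I}=|\det\alpha|^{2(p+q)}\lambda_I$, which is exactly your $\lambda_J=|\det m_J(P)|^{2(p+q)}\lambda_I$. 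The difference is the Jacobian. The paper computes $DZ_{\tilde I}$ through the lift $H\mapsto\breve H$, then reduces (implicitly, by symmetry) to the normalized pair $I=\{q+1,\dots,q+p\}$, $\tilde I=\{1,\dots,r\}\cup\{q+1+r,\dots,q+p\}$, and evaluates the determinant by an explicit basis computation on the subspaces $B_j$ and $C_i$, with careful bookkeeping of the blocks of $\alpha$, $\beta$ and $\gamma=\beta\,\alpha^{-1}$. You instead write the transition as a matrix M\"obius map $Z_J=(C+DZ_I)(A+BZ_I)^{-1}$ induced by a constant permutation matrix $g$, use the determinant formula $\det(H\mapsto XHY)=(\det X)^{p}(\det Y)^{q}$ on $\ma{q}{p}$, and obtain $\det X$ from the Schur-type factorization $\det g=\det(A+BZ_I)\,\det(D-Z_JB)$; since $|\det g|=1$, this yields $\bigl|\det(\partial Z_J/\partial Z_I)\bigr|^{2}=|\det m_J(P)|^{-2(p+q)}$ in one stroke. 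Your route is uniform in the pair $(I,J)$ — it avoids the paper's reduction to a special pair of index sets and the attendant sign and block bookkeeping — and it isolates the two structural identities (multiplicativity of the determinant of $H\mapsto XHY$, and the Schur factorization) that make the computation work; the paper's version is more elementary, being pure hands-on linear algebra on explicit bases, but is longer and tied to the normal form of the index sets. A minor remark: you only need $g$ constant with $|\det g|=1$, which any permutation matrix provides, so nothing about unitarity is actually used.
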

\begin{proof}
It suffices to show that the following transformation rule holds:
$$
\mbox{for every}\ I,\ \iti\ \mbox{in}\  \mathcal{I},\  \lambda _{I}\
\mbox{is equal to}\
\lambda _{\, \iti}\times\left|\, \det\, \,
\dfrac {\partial\, Z_{\, \iti}}{\partial\, Z_{I}}\,
\right|^{2}\ \mbox{on}\ \pi (U_{I})\cap \pi (U_{\, \iti}).
$$
Let $\pei$
be the matrix
$\varphi ^{-1}_{I}(Z_{I})$.
Then $\pei\bigl\{\mm{\, \iti}(\pei) \bigr
\}^{-1}=P_{\, \iti}$,
so $Z_{\, \iti}=\mm{\, \itic}(\pei)\, \bigl\{\mm{\, \iti}
(\pei) \bigr
\}^{-1}$.
The differential of the map
which sends $Z_{I}$ on $\pei$
is the map which sends $H$ on $\hc$, where
$\mm{\, \ic}(\hc)=H$ and $\mm{I}(\hc)=0$.
The change of charts sending $Z_{I}$ on $Z_{\, \iti}$, we obtain
\begin{align*}
D\, Z_{\, \iti}(H)&=\mm{\, \itic}(\hc)\, \bigl\{\mm{\, \iti}(\pei) \bigr\}^{-1}
-\mm{\, \itic}(\pei)\, \bigl\{\mm{\, \iti}(\pei) \bigr\}^{-1}\,
\mm{\, \iti}(\hc)\, \bigl\{\mm{\, \iti}(\pei) \bigr\}^{-1}\\
&=\bigl(\mm{\, \itic}(\hc)-\gamma \, \mm{\, \iti}(\hc) \bigr)\,
\alpha ^{-1},\\
\mbox{where}\quad
\alpha &=\mm{\, \iti}(\pei),\ \beta
=\mm{\, \itic}(\pei)\ \mbox{and}\ \gamma =\beta \, \alpha ^{-1}.
\end{align*}
Let us define
a map $u$ from $\ma{q}{p}$ to $\ma{q}{p}$
by $u(H)=\mm{\, \itic}(\hc)-
\gamma \, \mm{\, \iti}(\hc)$.
We can choose $I=\{q+1,\dots ,q+p\}$ and
$\iti =\{1,\dots ,r\}\cup\{q+1+r,\dots ,q+p\}$, where $0\leq r\leq \inf
(p,q)$.
We define the $k\times l $ matrix $\gde{i}{j}{(k\times l)}$ by
$\bigl(\gde{i}{j}{(k\times l)} \bigr)_{\lambda \mu }=
\delta _{i\lambda }\, \delta _{j\mu }$.
We have
\begin{align*}
\mm{\, \iti}
\bigl(\gdec{i}{j}{(q\times p)} \bigr)&=\gde{i}{j}{(p\times p)}
\quad  \mbox{if}\ i\leq
r,\  \mbox{and}\ 0\ \mbox{if}\  i>r,\\
\mbox{and}\quad
\mm{\, \itic}
\bigl(\gdec{i}{j}{(q\times p)} \bigr)&=\gde{i-r}{j}{(q\times p)}
\quad  \mbox{if}\ i>r,\ \mbox{and}\ 0\ \mbox{if}\ i\leq r.\ \mbox{Hence}\\
\quad
\bigl(\gamma \, \mm{\, \iti}(\gdec{i}{j}
{(q\times p)} ) \bigr)_{\alpha \beta }&=
\gamma _{\alpha i}\, \mm{\, \iti}(\gdec{i}{j}{(q\times p)})_{i j}\,
\delta _{j\beta }=\gamma _{\alpha i}\, \delta _{j\beta }
\quad \mbox{if}\
i\leq r,\ \mbox{and}\ 0\ \mbox{elsewhere}.
\end{align*}
Now the map
which sends $H$ to $\gamma \,\mm{\, \iti}(\hc)$
can be restricted if $1\leq j\leq p$
to the span $B_{j}$ of the $\bigl( E_{i, j }\bigr)
_{1\leq i\leq q}$.
The $r$ first columns of its matrix
are those of $\gamma $, the others are $0$.
The map which sends $H$ to
$\gamma \, \mm{\, \itic}(\hc)$ maps also $B_{j}$ into itself.
The
right upper block of its matrix is $I^{(q-r)}$, the other elements are
$0$.
This allows us to compute the matrix of the restriction of $u$ to
$B_{j}$, whose determinant is $(-1)^{r\times (q-r)}\det
\bigl(\gamma _{i j} \bigr)_{q-r+1\leq i\leq q\atop 1\leq j\leq r}$.
So
$\det u =(-1)^{p\times r\times (q-r)}\,\Bigl[
\det
\bigl(\gamma _{i\, j} \bigr)_{q-r+1\leq i\leq q\atop 1\leq j\leq
r}\Bigr]^{p}$.
For $1\leq i\leq q$, let $C_{i}$ be the span of the
$(E_{i,j})_{1\leq j\leq p}$.
Each $C_{i}$ is stable by the
map
from $\ma{q}{p}$ to $\ma{q}{p}$ which sends $H$ to $H\, \alpha ^{-1}$.
The matrix of the restriction is $\alpha
^{-1}$, so the determinant of the map is $(\det \alpha )^{-q}$. Hence
$$
\bigl|\, \det  DZ_{\, \iti}(H)\, \bigr|^{2}=\bigl|\, \det\, (\gamma _{i,j})
_{{}_{q-r+1\leq i\leq q\atop 1\leq j\leq r}}\,  \bigr|^{2p}\times \bigl|
\, \det\alpha
 \, \bigr|^{-2q}.
$$
Let $A$ be the right $r\times r$ upper block of
$\alpha $. The left $(p-r)\times (p-r)$ lower block of $\alpha $ is
$I^{(p-r)}$ and the right $(p-r)\times r$ lower block is $0$, so
$\det\alpha =(-1)^{r(p-r)}\det A$. The left $r\times (p-r)$ lower block
of $\beta $ is $0$, the right $r\times r$ block is $I^{(r)}$ so that the
left $r\times r$ lower block of $\gamma $ is $A^{-1}$.

From this we deduce $\bigl|\det\, DZ_{\, \iti}(H) \bigr|^{2}=\bigl|\,
\det\alpha  \, \bigr|^{-2(p+q)}$. Since $P_{I}\, \alpha ^{-1}=P_{\, \iti}$, we
have
$$
\lambda _{\, \iti}=\Bigl|\, \det\bigl(\, ^{t}\!P_{\, \iti}\,
\ba{P}_{\, \iti}\, \bigr) \, \Bigr|^{-(p+q)}
=\bigl|\, \det\alpha \,  \bigr|^{2(p+q)}\lambda _{I}=
\left|\, \det\, \dfrac {\partial Z_{\, \iti}}
{\partial Z_{I}}\, \right|^{-2} \lambda _{I}.
$$
\end{proof}
\begin{lemma}\label{SecUnLemmeDeux}
The unitary group $\upq$ preserves $\eta $.
\end{lemma}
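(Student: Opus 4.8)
The plan is to check directly that every $g\in\upq$ acts on $\gpqc$ by a biholomorphism $\Phi_g$ with $\Phi_g^{*}\eta=\eta$. Since by Lemma~\ref{SecUnLemmeUn} the form $\eta$ is globally defined, and since for a fixed $I$ both $\pi(U_I)$ and $\Phi_g^{-1}(\pi(U_I))$ are complements of divisors, hence dense open, it suffices to prove the equality in the single chart $I=\{1,\dots,p\}$ on the dense open set $\pi(U_I)\cap\Phi_g^{-1}(\pi(U_I))$ and then conclude by continuity. Over this chart a point $x$ has normalised representative $\pei=\varphi_I^{-1}(Z_I)=\begin{pmatrix}I^{(p)}\\ Z_I\end{pmatrix}$, for which ${}^{t}\pei\,\ba{\pei}=\id+{}^{t}Z_I\,\ba{Z}_I$.

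First I would record how $\lambda_I$ behaves under the left action. For $g$ unitary one has ${}^{t}g\,\ba{g}=\id$, so ${}^{t}(g\pei)\,\overline{g\pei}={}^{t}\pei\,\ba{\pei}$. Writing $m=\mm{I}(g\pei)$ for the normalising block and $Q=g\pei\,m^{-1}$ for the representative of $\Phi_g(x)$ over the chart $I$, one gets ${}^{t}Q\,\ba{Q}={}^{t}(m^{-1})\,{}^{t}\pei\,\ba{\pei}\,\overline{m^{-1}}$, whence, recalling $\lambda_I=\bigl|\det({}^{t}\pei\,\ba{\pei})\bigr|^{-(p+q)}$,
\[
\lambda_I\bigl(\Phi_g(x)\bigr)=\bigl|\det m\bigr|^{2(p+q)}\,\lambda_I(x).
\]

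The principal step is the Jacobian of the coordinate expression of $\Phi_g$. Decomposing $g=\begin{pmatrix}A&B\\ C&D\end{pmatrix}$ into blocks of sizes $p$ and $q$, we have $g\pei=\begin{pmatrix}A+BZ_I\\ C+DZ_I\end{pmatrix}$, so $m=A+BZ_I$ and the induced map on coordinates is the matrix M\"obius transformation $W=(C+DZ_I)(A+BZ_I)^{-1}$. Differentiating gives $dW=(D-WB)\,dZ_I\,(A+BZ_I)^{-1}$, i.e. the differential sends $H$ to $(D-WB)\,H\,(A+BZ_I)^{-1}$, whose complex determinant is $\det(D-WB)^{p}\,\det(A+BZ_I)^{-q}$. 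To evaluate $\det(D-WB)$ I would use the factorisation
\[
\begin{pmatrix}A+BZ_I&B\\ C+DZ_I&D\end{pmatrix}=g\begin{pmatrix}I^{(p)}&0\\ Z_I&I^{(q)}\end{pmatrix},
\]
whose determinant is $\det g$; expanding the left-hand side by the Schur complement of its upper-left block yields $\det(A+BZ_I)\,\det(D-WB)=\det g$. Hence the complex Jacobian equals $(\det g)^{p}\det(A+BZ_I)^{-(p+q)}$, and, $g$ being unitary with $|\det g|=1$, its squared modulus is $\bigl|\det m\bigr|^{-2(p+q)}$.

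Combining the two computations,
\[
\Phi_g^{*}\eta=\lambda_I\bigl(\Phi_g(x)\bigr)\,\bigl|\det(\partial W/\partial Z_I)\bigr|^{2}\,(i/2)^{pq}\,(dZ\wedge d\ba{Z})_I=\lambda_I(x)\,(i/2)^{pq}\,(dZ\wedge d\ba{Z})_I=\eta
\]
on the dense open set, and therefore everywhere. I expect the Schur-complement identity for the matrix M\"obius action to be the only delicate point: the factor $(\det g)^{p}$, which would otherwise spoil invariance, is annihilated exactly by unitarity, mirroring the factor $|\det\alpha|^{-2(p+q)}$ already encountered in the proof of Lemma~\ref{SecUnLemmeUn}.
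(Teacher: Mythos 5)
Your proof is correct and follows essentially the same route as the paper's: both compute the coordinate Jacobian of the action as a left-multiplication determinant to the power $p$ times a right-multiplication determinant to the power $-q$, identify it via a Schur-complement/block-column identity applied to $\det\bigl(g\cdot\text{unipotent}\bigr)=\det g$, and then use unitarity twice (for $|\det g|=1$ and for the transformation of $\lambda_I$) to see that the two factors cancel. The only differences are cosmetic: your chart is $I=\{1,\dots,p\}$ instead of $\{q+1,\dots,q+p\}$, and you make the density/continuity reduction explicit where the paper leaves it implicit.
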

\begin{proof}
We call $I$ the set $\{q+1,\dots ,q+p\}$. We define $P_{I}$
in $\pi (U_{I})$ by $P_{I}=\varphi ^{-1}_{I}(Z_{I})
$. Let $U$ be an element in $\upq$ such that  $\mm{I}(UP_{I})$ is
invertible. Let $\tilde{P}_{I}=UP_{I}\, \bigl\{\mm{I}(UP_{I}) \bigr\}^{-1}$
and $\tilde{Z}_{I}=\mm{\ic}(\tilde{P}_{I})$.
We have
$\ti{Z}_{I}=\mm{\ic}(U)\, P_{I}\, \bigl\{\mm{I}(U)P_{I}
\bigr\}^{-1}$.
So
$$
D\ti{Z}_{I}(H)=\mm{\ic}(U)\, \Bigl[
\hc\bigl\{
\mm{I}(U)P_{I}
\bigr\}^{-1}
-P_{I}\bigl\{
\mm{I}(U)P_{I}
\bigr\}^{-1}\mm{I}(U)\, \hc\, \bigl\{
\mm{I}(U)P_{I}
\bigr\}^{-1}
\Bigr]
.$$
Thus $D\ti{Z}_{I}(H)=
X \hc \delta ^{-1}$, where $\delta =\mm{I}(U)P_{I}$ and
$X=\mm{\ic}(U)\bigl[I^{(p+q)}-P_{I}\, \delta ^{-1}\, \mm{I} (U)\bigr]$.
Let $X_{1}$ be the $q\times q$ matrix of the $q$ first columns of $X$.
Then, $X\hc=X_{1}H$ and we get $D\ti{Z}_{I}(H)=X_{1} H \delta
^{-1}$. The determinant of the map from $\ma{q}{p}$ to $\ma{q}{p}$ which
sends $H$ to $H \delta  ^{-1}$
is
$(\det\delta )^{-q}$. The determinant of the map from
$\ma{q}{p}$ to $\ma{q}{p}$ which sends $H$ to $X_{1} H$
is $(\det X_{1})^{p}$, so
$\det D\ti{Z}_{I}=(\det X_{1})^{p}\, (\det \delta )^{-q}$.
We divide $U$ into four blocks:
$$U=\left(
\begin{array}{cc}
U_{q}&U_{q,p}\\
U_{p,q}&U_{p}
\end{array}
\right),\quad U_{q}\in M_{q}(\C\, ),\ U_{p}\in M_{p}(\C\,),\
U_{p,q}\in M_{p,q}(\C\, ),\ U_{q,p}\in M_{q,p}(\C\, ).
$$
Then
$\delta =U_{p,q}\, Z_{I}+U_{p}$, so
$X_{1}=U_{q}-\bigl(U_{q}\, Z_{I}+U_{q,p} \bigr)\,
\bigl(U_{p,q}\, Z_{I}  +U_{p}\bigr)^{-1}\, U_{q,p}$. Let
$Z$ in $\ma{p+q}{p+q }$ be the matrix with blocks $Z_{q}=I^{(q)}$, $Z_{p,q}=0$,
$Z_{q,p}=Z_{I}$, $Z_{p}=I^{(p)}$,
the notations being the same as above.
Writing $\det U=\det (U Z)$ and
using the column transformation
$C_{1}\leftarrow C_{1}-C_{2}\, \bigl(U_{p,q}\, Z_{I} +
U_{p} \bigr)^{-1}\, U_{p,q}$ where $C_{1}$ is made of the first
$q$ columns and $C_{2}$ of the remaining ones, we get
$$
\det U=\det\Bigl[
U_{q}-\bigl(
U_{q}\, Z_{I}+U_{q,p}
\bigr)\,
\bigl(
U_{p,q}\, Z_{I}+U_{p}
\bigr)^{-1}\, U_{p,q}
\Bigr]\times \det \bigl(U_{p,q}Z_{I}+U_{p} \bigr).
$$
Hence $\bigl|\det \, D\ti{Z}_{I} \bigr|^{2}=\bigl|\det \delta \bigr|^{-2(p+q)}$.
We have $\ti{P}_{I}=A P_{I} \delta ^{-1}$, so
$$
\lambda _{\, \iti}=\det \bigl(\, ^{t}\!\ti{P}_{I}\, \ba{\ti{P}}_{I} \bigr)
^{-(p+q)}=\det \bigl(\, ^{t}\!P_{I}\, \ba{P}_{I} \bigr)^{-(p+q)}\times
\bigl|\det \delta  \bigr|^{\, 2(p+q)}=\lambda _{I}\,
\bigl|\det D\ti{Z}_{I} \bigr|^{-2},
$$
which proves the result.
\end{proof}
\begin{proposition}\label{SecUnTheoUn}
\begin{enumerate}
  \item [1.]
$dV\left(\gpq\right)=\eta $.
  \item [2.]
If $I\in\mathcal{I}$, $\left|\gpq\right|_{I}=
\Bigl\{\det\bigl(I^{(p)}+^{t}\!Z_{I}\, \ba{Z}_{I} \bigr)  \Bigr
\}^{-(p+q)}$.
  \item [3.]
  $\RR\left(\gpq\right)=(p+q)\gpq$.
\end{enumerate}
\end{proposition}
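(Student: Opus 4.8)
The plan is to push everything down to the local K\"ahler potential. In the chart $\pi(U_{I})$ one has $P_{I}=\varphi_{I}^{-1}(Z_{I})$ with $m_{I}(P_{I})=I^{(p)}$ and $m_{\ic}(P_{I})=Z_{I}$, so that ${}^{t}\!P_{I}\,\ba{P}_{I}=I^{(p)}+{}^{t}\!Z_{I}\,\ba{Z}_{I}$ and therefore $\gpq=i\,\dd\,\Phi_{I}$ with $\Phi_{I}=\log\det\bigl(I^{(p)}+{}^{t}\!Z_{I}\,\ba{Z}_{I}\bigr)$. Writing $g_{\lambda\,\ba{\mu}}=\partial_{\lambda\,\ba{\mu}}\Phi_{I}$ for the metric coefficients and $|g|_{I}=\det\bigl(g_{\lambda\,\ba{\mu}}\bigr)$, the three assertions organise themselves as follows: statement (2) is exactly the computation of $|g|_{I}$; statement (1) then follows from (2) together with the gluing already proved in Lemma~\ref{SecUnLemmeUn}; and statement (3) follows from (2) by the very definition of the Ricci form. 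So the whole proof reduces to (2).

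For (2), which is the crux, I would compute the complex Hessian of $\Phi_{I}$ head on. Dropping the index $I$ and setting $M=I^{(p)}+{}^{t}\!Z\,\ba{Z}$, a first differentiation gives $\partial_{\ba{\mu}}\Phi=\tr\bigl(M^{-1}\partial_{\ba{\mu}}M\bigr)$, and a second differentiation, after simplification, produces a matrix of Kronecker-product shape
\[
g_{(\alpha\beta)(\gamma\delta)}=\bigl({}^{t}\!P\bigr)_{\alpha\gamma}\,N_{\beta\delta},\qquad N=\bigl(I^{(p)}+Z^{*}Z\bigr)^{-1},\quad P=I^{(q)}-ZNZ^{*},
\]
where $Z^{*}={}^{t}\ba{Z}$, the indices $\alpha,\gamma$ running over the $q$ rows of $Z$ and $\beta,\delta$ over its $p$ columns. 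The key algebraic simplification is the identity $I^{(q)}-Z\bigl(I^{(p)}+Z^{*}Z\bigr)^{-1}Z^{*}=\bigl(I^{(q)}+ZZ^{*}\bigr)^{-1}$, which identifies $P=\bigl(I^{(q)}+ZZ^{*}\bigr)^{-1}$. Since the determinant of a Kronecker product of a $q\times q$ block by a $p\times p$ block equals $(\det P)^{p}(\det N)^{q}$, I obtain
\[
|g|_{I}=(\det P)^{p}(\det N)^{q}=\det\bigl(I^{(p)}+Z^{*}Z\bigr)^{-(p+q)},
\]
the last equality using Sylvester's identity $\det\bigl(I^{(q)}+ZZ^{*}\bigr)=\det\bigl(I^{(p)}+Z^{*}Z\bigr)$. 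As all these determinants are real and positive this is $\det\bigl(I^{(p)}+{}^{t}\!Z\,\ba{Z}\bigr)^{-(p+q)}$, which is (2).

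A calculation-free alternative for (2) is worth recording, since it reuses the two lemmas. Under a change of chart $Z_{I}\mapsto Z_{\iti}$ the density $|g|_{I}$ transforms by the factor $\bigl|\det(\partial Z_{\iti}/\partial Z_{I})\bigr|^{2}$ --- the standard transformation law of a Hessian determinant --- which is exactly the factor by which $\lambda_{I}$ transforms in Lemma~\ref{SecUnLemmeUn}; hence $|g|_{I}/\lambda_{I}$ is a well-defined function on $\gpqc$. Now $dV(\gpq)$ is $\upq$-invariant because $\upq$ acts by isometries, whereas $\eta$ is $\upq$-invariant by Lemma~\ref{SecUnLemmeDeux}, so $|g|_{I}/\lambda_{I}$ is $\upq$-invariant and therefore constant, the action being transitive. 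Evaluating at $Z_{I}=0$, where $g_{(\alpha\beta)(\gamma\delta)}=\delta_{\alpha\gamma}\delta_{\beta\delta}$ gives $|g|_{I}=1=\lambda_{I}$, pins the constant to $1$ and re-proves (2).

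Granting (2), the rest is immediate. For (1), the volume element in the chart is $dV(\gpq)=|g|_{I}\,(i/2)^{pq}\bigl(dZ\wedge d\,\ba{Z}\bigr)_{I}$, and substituting $|g|_{I}=\lambda_{I}$ turns this into $\eta$; that the local expressions glue to a global form on $\gpqc$ is precisely Lemma~\ref{SecUnLemmeUn}. For (3), the Ricci form is $\RR(\gpq)=-i\,\dd\,\log|g|_{I}$, and since $\log|g|_{I}=-(p+q)\log\det\bigl(I^{(p)}+{}^{t}\!Z\,\ba{Z}\bigr)$ we get $\RR(\gpq)=(p+q)\,i\,\dd\,\log\det\bigl(I^{(p)}+{}^{t}\!Z\,\ba{Z}\bigr)=(p+q)\gpq$. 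The one real obstacle is the determinant in (2): the danger is purely in the bookkeeping of transposes and conjugates and in correctly reading off the Kronecker structure of the Hessian, which is exactly the step the homogeneity argument of the previous paragraph is designed to avoid.
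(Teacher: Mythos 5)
Your proposal is correct, and your primary argument takes a genuinely different route from the paper; in fact, the ``calculation-free alternative'' you record in passing \emph{is} essentially the paper's proof, up to reordering. The paper never computes the complex Hessian away from the origin: it proves (1) first, noting that $dV\left(\gpq\right)$ is $\upq$-invariant because $\upq$ acts by holomorphic isometries, that $\eta$ is $\upq$-invariant by Lemma~\ref{SecUnLemmeDeux}, and that the two forms coincide at $Z_{I}=0$, where $\gpq(H,K)=\tr (H\ba{K})$; transitivity of the action then gives $dV\left(\gpq\right)=\eta$ globally, after which (2) is read off from $dV\left(\gpq\right)=\left|\gpq\right|_{I}\,(i/2)^{pq}\bigl(dZ\wedge d\,\ba{Z}\bigr)_{I}$, and (3) follows exactly as in your last paragraph. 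Your version of that argument (constancy of the globally defined ratio $|g|_{I}/\lambda_{I}$, hence (2) before (1)) is a cosmetic variant of the same idea. Your main route --- the head-on Hessian computation via the push-through identity $I^{(q)}-Z\bigl(I^{(p)}+Z^{*}Z\bigr)^{-1}Z^{*}=\bigl(I^{(q)}+ZZ^{*}\bigr)^{-1}$, the Kronecker determinant formula and Sylvester's identity --- is sound (all three ingredients are correctly stated and the index bookkeeping checks out), and it buys something the paper's proof does not: it is self-contained, needing neither Lemma~\ref{SecUnLemmeDeux} nor the transitivity of $\upq$, and it yields the explicit metric $\tr\Bigl(\bigl(I^{(p)}+Z^{*}Z\bigr)^{-1}dZ^{*}\bigl(I^{(q)}+ZZ^{*}\bigr)^{-1}dZ\Bigr)$ at every point rather than only at the origin. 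The price is precisely the transpose-and-conjugate bookkeeping you flag; the paper's symmetry argument exists to avoid it, since it only ever evaluates the metric at $Z_{I}=0$, where it is the identity. Note that both routes still rely on Lemma~\ref{SecUnLemmeUn} to know that $\eta$ is a well-defined global form.
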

\begin{proof}
$1$. Let $I$ in $\mathcal{I}$. It is easy to compute $\gpq$ at
the point $Z_{I}=0$:
$\gpq (H,K)=\tr (H\, \ba{K})$. Then $dV(\gpq)_{\bigm|Z_{I}=0}=\bigl(i/2 \bigr)
^{pq}\, \bigl(dZ\wedge d\, \ba{Z}\bigr)_{I}=\eta _{\bigm|Z_{I}=0}$. Since
$dV(\gpq)$ and $\eta $ are invariant by the transitive action of $\upq$,
we have $dV(\gpq)=\eta $.
\par\medskip
$2$. Since $dV(\gpq)=\bigl|\gpq \bigr|_{I}\, (i/2)^{pq}\,
\bigl(dZ\wedge d\, \ba{Z}\bigr)_{I}$, property
$1$ gives the result.
\par\medskip
$3$. Remark that $\gpq=i\, \dd\, \log \bigl\{\det(I^{(p)}+
^{t}\!\!Z_{I}\, \ba{Z}_{I}) \bigr\}$. Since $\RR \left(\gpq\right)
=-i\, \dd\, \log \left|\gpq\right|_{I}$, we obtain
$\RR\left(\gpq\right)=(p+q)\gpq$, which expresses that $\gpq$ is
Einstein, with factor $p+q$.
\end{proof}
\section{Some general results about Tian's invariant}
\subsection{Tian's invariant with a normalization on a finite set}
If $X$ is a manifold, we will denote by $\mu _{X}$ a measure on $X$
compatible with the manifold structure.
\begin{theorem}\label{SecDeuxUnProp}
Let $M$ be a compact K\"{a}hler manifold. We suppose that
there exists
a compact Lie group $G$ of holomorphic
isometries. Let $\Delta _{n}=\{P_{1},\dots ,P_{n}\}$
be a finite subset of
$M$. Let $\alpha (\omega )$ (resp. $\alpha _{_{\Delta _{n}}}(\omega )$ )
be the supremum of the set of the nonnegative real numbers $\alpha $
satisfying the condition: there exists a constant $C$ such that the
inequality $\ds\int _{M}e^{-\alpha \varphi }\leq C$ holds for all the
$\omega $-admissible functions $\varphi $ with $\sup \varphi \geq 0$
(resp. with $\varphi (P_{i})\geq 0$ for $1\leq i\leq n$).
Suppose in addition that the orbit of each $P_{i}$ under the action of $G$
has positive measure.
Then $\alpha (\omega )
=\alpha _{_{\Delta _{n}}}(\omega )$.
\end{theorem}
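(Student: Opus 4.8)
The plan is to prove the two inequalities separately. The inequality $\alpha(\omega)\leq\alpha_{\Delta_n}(\omega)$ is immediate and carries no content: if $\varphi(P_i)\geq 0$ for all $i$ then $\sup\varphi\geq\varphi(P_1)\geq 0$, so the class of functions normalized at $\Delta_n$ is contained in the class normalized by $\sup\varphi\geq 0$, and any $\alpha$ for which the integral bound holds over the larger class holds a fortiori over the smaller one. The whole point is the reverse inequality $\alpha_{\Delta_n}(\omega)\leq\alpha(\omega)$. So I would fix $\alpha<\alpha_{\Delta_n}(\omega)$ together with a constant $C$ such that $\int_M e^{-\alpha\psi}\,d\mu_M\leq C$ for every $\omega$-admissible $\psi$ with $\psi(P_i)\geq 0$, $1\leq i\leq n$, and try to produce a constant $C'$ with $\int_M e^{-\alpha\varphi}\,d\mu_M\leq C'$ for every $\omega$-admissible $\varphi$ with $\sup\varphi\geq 0$. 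That such a $C$ exists for \emph{every} $\alpha<\alpha_{\Delta_n}(\omega)$ — i.e. that the admissible range of exponents is an interval — follows from H\"older's inequality, which gives $\int_M e^{-\alpha'\varphi}\leq\bigl(\int_M e^{-\alpha\varphi}\bigr)^{\alpha'/\alpha}\mu_M(M)^{1-\alpha'/\alpha}$ for $0\leq\alpha'\leq\alpha$.

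The mechanism is translation by $G$. If $g\in G$ then $g$ is a holomorphic isometry, so $\omega+i\partial\overline{\partial}(\varphi\circ g)=g^{*}(\omega+i\partial\overline{\partial}\varphi)>0$, whence $\varphi\circ g$ is again $\omega$-admissible, and since $g$ preserves $\mu_M$ one has $\int_M e^{-\alpha(\varphi\circ g)}\,d\mu_M=\int_M e^{-\alpha\varphi}\,d\mu_M$. Consequently, if for a given $\varphi$ I can find $g\in G$ and a constant $A\geq 0$ \emph{independent of $\varphi$} with $\varphi(g\cdot P_i)\geq -A$ for every $i$, then $\tilde\varphi:=\varphi\circ g+A$ is $\omega$-admissible with $\tilde\varphi(P_i)\geq 0$ for all $i$, so $\int_M e^{-\alpha\tilde\varphi}\leq C$; as $\int_M e^{-\alpha\tilde\varphi}=e^{-\alpha A}\int_M e^{-\alpha\varphi}$, this gives $\int_M e^{-\alpha\varphi}\leq e^{\alpha A}C$, which is what I want with $C'=e^{\alpha A}C$. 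Everything thus reduces to one uniform statement: there is $A\geq 0$ such that every admissible $\varphi$ with $\sup\varphi\geq 0$ admits some $g\in G$ with $\min_i\varphi(g\cdot P_i)\geq -A$.

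To establish this I first normalize: replacing $\varphi$ by $\varphi-\sup\varphi$ only multiplies the integral by $e^{-\alpha\sup\varphi}\leq 1$, so I may assume $\sup\varphi=0$, i.e. $\varphi\leq 0$. The one genuinely analytic ingredient — and the step I expect to be the main obstacle — is the standard uniform lower bound for normalized $\omega$-plurisubharmonic functions: there is a constant $C_0=C_0(M,\omega)$ with $\int_M\varphi\,d\mu_M\geq -C_0$ for all admissible $\varphi$ with $\sup\varphi=0$ (equivalently, the $L^1$-compactness of this family). Granting it, and using $\varphi\leq 0$, I get $-C_0\leq\int_M\varphi\leq -A\,\mu_M(\{\varphi<-A\})$, hence the sublevel estimate $\mu_M(\{\varphi<-A\})\leq C_0/A$ for every $A>0$.

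Finally I would convert this into the existence of a good $g$ by an averaging argument over the compact group, and this is exactly where the positive-measure hypothesis enters. For each $i$ the orbit map $g\mapsto g\cdot P_i$ pushes the normalized Haar measure $\mu_G$ forward to the $G$-invariant probability measure $\nu_i$ on the orbit $\mathcal{O}_i=G\cdot P_i$. Since $G$ acts by isometries, the restriction of $\mu_M$ to $\mathcal{O}_i$ is also $G$-invariant; because $\mathcal{O}_i$ has positive measure by hypothesis, uniqueness of the invariant measure on the homogeneous space $\mathcal{O}_i$ forces $\nu_i=\mu_M|_{\mathcal{O}_i}/\mu_M(\mathcal{O}_i)$. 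Therefore $\mu_G(\{g:\varphi(g\cdot P_i)<-A\})=\nu_i(\{\varphi<-A\})\leq\mu_M(\{\varphi<-A\})/\mu_M(\mathcal{O}_i)\leq C_0/\bigl(A\,\mu_M(\mathcal{O}_i)\bigr)$. Summing over $i$ and choosing $A$ large enough that $\sum_{i=1}^{n}C_0/\bigl(A\,\mu_M(\mathcal{O}_i)\bigr)<1$, the set of $g$ failing some constraint has Haar measure strictly less than $1$, so its complement is nonempty: there is $g\in G$ with $\varphi(g\cdot P_i)\geq -A$ for all $i$ at once, with $A$ depending only on $n$, $C_0$ and the orbit volumes, hence uniform in $\varphi$. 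This supplies the translation estimate and closes the argument.
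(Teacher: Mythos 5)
Your proof is correct, and it takes a genuinely different route from the paper's. The paper argues by contradiction with a sequence $(\varphi_k)$, $\sup\varphi_k=0$, whose integrals blow up: Green's formula gives $L^1$ and gradient bounds, Kondrakov's theorem then gives a subsequence converging almost everywhere, hence pointwise lower bounds on a full-measure set $\Omega$; a good $g\in G$ is then produced by showing that otherwise some set $A_i\subset G$ of positive Haar measure would satisfy $A_i\cdot P_i\subset\Omega^{c}$, which is impossible because the orbit map $g\mapsto g\cdot P_i$, having constant rank and an image of positive measure, is a submersion and so maps positive-measure sets to positive-measure sets. You instead work with a single $\varphi$ and obtain a uniform constant: the classical $L^1$ bound plus Chebyshev controls $\mu_M(\{\varphi<-A\})$, and you convert this into a Haar-measure bound on the bad set of $g$'s by identifying the pushforward of Haar measure under the orbit map with the normalized restriction of $\mu_M$ to the orbit (uniqueness of the invariant probability measure on a compact homogeneous space), then conclude with a union bound. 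The positive-measure hypothesis thus enters at the same spot in both proofs, but in different guises (submersion versus measure identification). Your route avoids Sobolev embedding and subsequence extraction entirely and yields an effective constant $C'=e^{\alpha A}C$ with $A$ independent of $\varphi$, which is a stronger, quantitative form of the paper's qualitative lemma; its one borrowed ingredient, the uniform bound $\int_M\varphi\geq-C_0$ for admissible $\varphi$ with $\sup\varphi=0$, is legitimately standard and is exactly what the paper extracts from Green's formula (the representation formula at a maximum point together with $\Delta\varphi\leq m$), so both arguments ultimately rest on the same analytic input.
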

We first establish a few lemmas which will be useful for the
proof.
\begin{lemma}\label{SecDeuxUnLemUn}
Let $\bigl(\varphi _{n}\bigr)_{n\geq 0}$ be a sequence of
admissible functions with nonnegative maxima. Then
there exists a subset $\Omega $ of $M$, with
$\mum (\Omega )=\mum(M)$, and a
subsequence $\varphi _{n_{k}}$ of $\varphi _{n}$, such that for
every $p
$ in $\Omega $, the sequence $\bigl(\varphi _{n_{k}}(p ) \bigr)_{k\geq 0}$
has a finite lower bound (depending on $p$).
\end{lemma}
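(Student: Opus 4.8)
The plan is to reduce the statement to the classical $L^1$-compactness of normalized $\omega$-plurisubharmonic functions, and then to pass to almost-everywhere convergence along a subsequence.

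First I would normalize. Since each $\varphi _n$ is $\omega$-admissible with nonnegative maximum, set $c_n=\max_M\varphi _n\geq 0$ and $\psi _n=\varphi _n-c_n$. Then $\omega +i\dd\,\psi _n>0$ and $\max_M\psi _n=0$, while $\varphi _n=\psi _n+c_n\geq \psi _n$ because $c_n\geq 0$. It therefore suffices to produce the set $\Omega$ and the subsequence for the normalized family $(\psi _n)$: any lower bound for $\psi _{n_k}(p)$ is \emph{a fortiori} a lower bound for $\varphi _{n_k}(p)$, so the desired conclusion for $(\varphi _{n_k})$ follows from the one for $(\psi _{n_k})$.

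The key step is compactness. The family $\{\psi : \omega +i\dd\,\psi>0,\ \max_M\psi=0\}$ is relatively compact in $L^1(M,\mum)$. I would justify this by working in the finitely many coordinate charts $\pi(U_I)$ of Section~2: up to a fixed smooth local potential for $\gpq$, each $\psi _n$ is an honest plurisubharmonic function, uniformly bounded above because $\max_M\psi _n=0$. The sub-mean-value inequality then yields a uniform $L^1$ bound, and H\"ormander's compactness theorem for plurisubharmonic functions gives a subsequence $\psi _{n_k}$ converging in $L^1(M)$ to a function $\psi$ whose upper semicontinuous regularization is again $\omega$-plurisubharmonic; the normalization $\max_M\psi _n=0$ rules out the degenerate alternative $\psi _{n_k}\to-\infty$. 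Being $\omega$-plurisubharmonic, $\psi$ is locally integrable, hence finite outside a set of $\mum$-measure zero.

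Finally I would extract from $L^1$-convergence an almost-everywhere convergent subsequence, relabelled $\psi _{n_k}$, and define $\Omega$ to be the full-measure set on which $\psi _{n_k}(p)\to\psi(p)$ with $\psi(p)\in\R$. For each such $p$ the sequence $\bigl(\psi _{n_k}(p)\bigr)_k$ converges to a finite limit, and is therefore bounded below; combined with $\varphi _{n_k}(p)\geq\psi _{n_k}(p)$ this yields the finite lower bound for $\bigl(\varphi _{n_k}(p)\bigr)_k$. The main obstacle is the compactness step: one must make precise that, once the supremum is normalized, the $\omega$-admissibility condition produces a uniformly $L^1$-bounded family of quasi-plurisubharmonic functions to which the standard plurisubharmonic compactness theorem applies, and one must verify that the limit $\psi$ is not identically $-\infty$, so that $\Omega$ indeed has full measure.
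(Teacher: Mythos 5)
Your proof is correct, but it reaches the key $L^1$-compactness by a genuinely different route than the paper. The paper stays entirely within Riemannian/elliptic theory: after the same normalization $\max\varphi_n=0$, it writes $\varphi_n$ via Green's formula, uses admissibility to bound the Laplacian on one side (giving $\int_M|\varphi_n|\leq CmV$ and $\int_M|\Delta\varphi_n|\leq 2mV$), bounds $\int_M|\nabla\varphi_n|$ through the gradient of the Green kernel, concludes that the sequence is bounded in $H^{1,1}(M)$, and then applies Kondrakov's compactness theorem to extract an $L^1$-convergent, hence (after a further extraction) almost everywhere convergent, subsequence with limit in $L^1(M)$. You instead invoke the pluripotential-theoretic compactness of normalized quasi-plurisubharmonic functions: local K\"ahler potentials turn the $\psi_n$ into genuine plurisubharmonic functions uniformly bounded above, the sub-mean-value inequality gives the uniform $L^1$ bound, and H\"ormander's compactness theorem (with the normalization $\max_M\psi_n=0$ ruling out the alternative $\psi_{n_k}\to-\infty$, via the usual connectedness/propagation argument on a finite chart cover) yields the $L^1$ limit; the endgame (a.e. convergence, finiteness a.e. of the limit) is identical. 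The paper's approach is self-contained given standard facts about the Green function on a compact manifold and needs no complex-analytic machinery beyond admissibility; yours exploits the complex structure more directly, avoids the gradient estimate via $\nabla_Q G$ entirely, and rests on a compactness theorem that is standard in pluripotential theory on compact K\"ahler manifolds. One small correction: the lemma is stated and used (in Theorem \ref{SecDeuxUnProp}) for an \emph{arbitrary} compact K\"ahler manifold $M$, so you should not appeal to the specific charts $\pi(U_I)$ of Section 2; instead take any finite cover of $M$ by coordinate charts on which $\omega$ admits a smooth local potential, which exists by the local $\partial\overline{\partial}$-lemma. With that replacement your argument goes through in the generality required.
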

\begin{proof}
It is sufficient to assume that $\varphi _{n}$ has null maxima.
Let $Q_{n}$ be a point such that $\varphi _{n}(Q_{n})$ vanishes.
Green's formula runs as follows:
$$
\varphi _{n}(Q_{n})=\dfrac {1}{V}\int _{M}\varphi _{n}+
\int _{M}G(Q_{n},R)\, \Delta \varphi _{n}(R)\, dV(R),
$$
with $G(Q,R)\geq 0$ and $\ds\int _{M}G(Q,R)\, dV(R)=C$, where
$C$ is a positive constant (see \cite{Aub8}).
Since $\varphi _{n}$
is admissible, $\Delta\varphi _{n}$ is less than $m$,
$m$ being the dimension of
$M$. Thus $\ds\int _{M}|\varphi _{n}|\leq C\, m\, V$.
Furthermore,
$\ds\int_{M}\Delta \varphi _{n}=0$, so
$\ds\int_{M}|\Delta \varphi _{n}|=2\ds\int_{\{\Delta \varphi _{n}>0\}}
\Delta \varphi _{n}\leq 2mV$. For every $Q$ in $M$, we have
$
\ds\nabla \varphi _{n}(Q)=\ds\int_{M}\nabla_{Q}G(Q,R)\Delta \varphi
_{n}(R)dv(R)
$, so that
$$
\int _{M}|\nabla\varphi _{n}|\leq
\int_{M}\Bigl[\int _{M}|\nabla_{Q}G(Q,R)|dv(Q)\Bigr] |\Delta \varphi _{n}(R)|dv(R)\leq
2m\widetilde{C}V,
$$
since $\ds\int _{M}|\nabla_{Q}G(Q,R)|dv(Q)$ is a continuous, hence a bounded
function on $M$.
Thus $(\varphi _{n})_{n\geq 0}$ is bounded in the Sobolev space
$H^{1,1}(M)$. By Kondrakov's theorem, we can extract from
$(\varphi _{n})_{n\geq 0}$ a subsequence which converges in $L^{1}(M)$,
and after an other extraction we can suppose that this sequence
converges almost everywhere
to a function $\varphi $ of $L^{1}(M)$. Since $\varphi $ is
finite almost everywhere, we get the result.
\end{proof}
\begin{lemma}\label{SecDeuxUnLemTrois}
Let $\bigl(\varphi _{n}\bigr)_{n\geq 0}$ be a sequence of
admissible functions with nonnegative maxima and suppose that
there exists a
compact group $G$ of holomorphic isometries of $M$ such that the orbit
of each $P_{i}$ has positive measure.
Let $\Phi :G\lga  \R\cup\{-\infty \}$ be the map defined by
$\Phi (g)=\ds\inf_{\Delta _{n}}\inf_{k\geq 0}(\varphi_{k}\circ g
)$. Then there exists $g$ in $G$ such that $\Phi (g)$ is finite.
\end{lemma}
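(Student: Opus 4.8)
The plan is to show that the set of group elements $g$ for which $\Phi(g)=-\infty$ is $\mu_G$-negligible, so that a good $g$ certainly exists. First I apply Lemma~\ref{SecDeuxUnLemUn} to $(\varphi_n)$, extracting a subsequence that I relabel $(\varphi_k)$ together with a subset $\Omega\subseteq M$ with $\mu_M(\Omega)=\mu_M(M)$ on which $\inf_{k\geq 0}\varphi_k$ is finite; it is this subsequence that enters the application to Theorem~\ref{SecDeuxUnProp}, so it suffices to establish the conclusion for it. Setting $N=M\setminus\Omega$, so that $\mu_M(N)=0$, and introducing for each $i$ the orbit map $\pi_i:G\to M$, $\pi_i(g)=g\cdot P_i$, the problem reduces to finding $g$ with $g\cdot P_i\in\Omega$ for every $i$: for such a $g$ the quantity $\inf_{k\geq 0}\varphi_k(g\cdot P_i)$ is finite for each of the finitely many indices, hence so is $\Phi(g)=\inf_{\Delta_n}\inf_{k\geq 0}(\varphi_k\circ g)$. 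Thus I only need $\bigcup_{i=1}^n\pi_i^{-1}(N)\neq G$.

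The heart of the matter is to compare Haar measure $\mu_G$ on $G$ with the Riemannian measure $\mu_M$ through these orbit maps. For fixed $i$, the push-forward $(\pi_i)_*\mu_G$ is a finite measure carried by the orbit $O_i=G\cdot P_i$, and it is $G$-invariant because $\pi_i\circ L_h=h\cdot\pi_i$ and $\mu_G$ is left-invariant. Now the positive-measure hypothesis forces $O_i$ to be full-dimensional, hence an open (compact, so clopen) submanifold, so that $\mu_M|_{O_i}$ is a nonzero $G$-invariant Radon measure on it. Since $O_i$ is $G$-equivariantly diffeomorphic to the compact homogeneous space $G/\mathrm{Stab}(P_i)$, on which the invariant measure is unique up to a positive scalar, the two measures must be proportional: $(\pi_i)_*\mu_G=c_i\,\mu_M|_{O_i}$ with $0<c_i<\infty$. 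In particular $(\pi_i)_*\mu_G$ is absolutely continuous with respect to $\mu_M$.

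From this comparison the conclusion is immediate. For each $i$,
$$
\mu_G\bigl(\pi_i^{-1}(N)\bigr)=\bigl((\pi_i)_*\mu_G\bigr)(N)=c_i\,\mu_M(N\cap O_i)\le c_i\,\mu_M(N)=0 ,
$$
so, the set $\Delta_n$ being finite, $\mu_G\bigl(\bigcup_{i=1}^n\pi_i^{-1}(N)\bigr)=0<\mu_G(G)$. Hence the complement is nonempty (indeed of full Haar measure), and any $g$ lying in it satisfies $g\cdot P_i\in\Omega$ for all $i$, giving $\Phi(g)$ finite.

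The main obstacle is precisely the measure-theoretic step of the second paragraph: one must guarantee that the push-forward of Haar measure under an orbit map charges no $\mu_M$-null set. This is exactly where the hypothesis that each orbit has positive measure is used, since it makes $\mu_M|_{O_i}$ nonzero and thereby lets the uniqueness of the invariant measure on $G/\mathrm{Stab}(P_i)$ force the proportionality, hence the absolute continuity $(\pi_i)_*\mu_G\ll\mu_M$. Once this is secured, the measurability of $\pi_i^{-1}(N)$ (from continuity of $\pi_i$ and measurability of $N$) and the finiteness of $\Delta_n$ make the remainder routine.
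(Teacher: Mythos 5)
Your proof is correct, and it takes a genuinely different route from the paper's. The paper argues by contradiction: if $\Phi\equiv-\infty$, then one of the sets $A_i=\bigl\{g\in G:\inf_{k\geq 0}\varphi_k(g\cdot P_i)=-\infty\bigr\}$ has positive Haar measure; by Lemma \ref{SecDeuxUnLemUn} its image $A_i\cdot P_i$ lies in the null set $\Omega^c$; equivariance gives the orbit map $u_i$ constant rank, the positive-measure hypothesis on the orbit upgrades it to a submersion, and a submersion carries a positive-measure set to a set of positive outer measure --- a contradiction. You instead transfer nullity in the opposite direction, and directly: the push-forward of Haar measure under the orbit map is a $G$-invariant measure carried by the orbit, the positive-measure hypothesis forces the orbit to be a full-dimensional, hence clopen, compact submanifold so that $\mu_M$ restricts to a nonzero invariant measure on it, and uniqueness (up to scalar) of invariant Radon measures on the compact homogeneous space $G/\mathrm{Stab}(P_i)$ yields $(\pi_i)_*\mu_G=c_i\,\mu_M|_{O_i}$, hence $(\pi_i)_*\mu_G\ll\mu_M$ and the bad set $\bigcup_i\pi_i^{-1}(M\setminus\Omega)$ is Haar-null. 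What each approach buys: the paper's argument is local and elementary (constant-rank theorem plus a Fubini argument in a chart), but it quietly relies on the measurability of the image $u_i(A_i)$ and on the fact that submersions preserve positive measure; yours is global, yields the stronger conclusion that almost every $g\in G$ works, and avoids image-measurability issues entirely, at the price of invoking orbit theory and the uniqueness of invariant measures on homogeneous spaces. Two small points. First, your explicit reduction to the subsequence furnished by Lemma \ref{SecDeuxUnLemUn} is exactly right: the paper performs the same relabeling, but inside the proof of Theorem \ref{SecDeuxUnProp}, and its own proof of the present lemma (the inclusion $A_i\cdot P_i\subseteq\Omega^c$) is only valid under that reading. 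Second, $\Omega$ is only $\mu_M$-measurable, so continuity of $\pi_i$ does not by itself make $\pi_i^{-1}(N)$ measurable; the repair is one line --- by regularity of $\mu_M$ enclose $N$ in a Borel null set $N'$ and apply your push-forward identity to $N'$.
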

\begin{proof}
Suppose that $\Phi \equiv -\infty $.
For $i=1,\dots ,n$,
let
$A_{i}$ be the set of the
$g$ in $G$ such that
$\ds\inf_{k\geq 0}(\varphi _{k}\circ g) (P_{i})
=-\infty $. The sets $A_{i}$ are measurable and $\ds\cup_{i=1}^{n}
A_{i}=G$, so there exists $i$ such that $A_{i}$ has positive measure.
From
Lemma (\ref{SecDeuxUnLemUn}), $A_{i}.P_{i}$ is  a subset of $ \Omega ^{c}$.
Since $\Omega $ and $M$ have the same measure, the measure of
$A_{i}.P_{i}$ vanishes. Let $u_{i}$ be the map from $G$ to $M$ which
sends $g$ to
$g(P_{i})$. Then $u_{i}$ has constant rank on $G$. Indeed,
$u_{i}\circ L(g)=\sigma _{g}\circ u_{i}$, where $L(g)$ is the left
translation by $g$ and $\sigma _{g}$ the map from $M$ to $M$ which sends
$x$ to $g.x$. Since $G.P_{i}$ has positive measure, $u_{i}$ is a
submersion on $G$, so that $u_{i}(A_{i})$ has positive measure. This is
a contradiction since $u_{i}(A_{i})=A_{i}.P_{i}$.
\end{proof}
We can now prove Theorem (\ref{SecDeuxUnProp}).
\begin{proof}
It is clear that $\alpha (\omega )
\leq \alpha _{_{\Delta _{n}}}(\omega )$. Conversely, let $\varepsilon
>0$. There exists a sequence $\bigl(\varphi _{n} \bigr)_{n\geq 0}$ of
admissible functions with positive maxima such that
$\ds\int _{M}e^{-(\alpha (\omega )+\varepsilon )\varphi _{k}}$
goes to infinity  as $k$ goes to infinity.
Replacing $\varphi _{n}$ by $\varphi _{n}-\sup \varphi _{n}$,
we can take $\sup \varphi _{n}=0$.
First we apply Lemma(\ref{SecDeuxUnLemUn}). For the sake of
simplicity, we take $\varphi _{n_{k}}=\varphi _{k}$.
From Lemma (\ref{SecDeuxUnLemTrois}), there exists an element $g$ in
$G$ such that $\Phi (g)
$ is finite; we define $\Psi _{k}$ by
$\Psi _{k}=\varphi _{k}\circ g-\Phi (g)$.
Since $g$ is an
isometry, $\Psi _{k}$ is $\omega $-admissible, and
from the very definition of $\Phi  $,
$\Psi _{k}(P_{i})$
is nonnegative. Furthermore,
$\ds
\int _{M}e^{-(\alpha (\omega )+\varepsilon  )\Psi _{k}}
=e^{(\alpha (\omega )+\varepsilon  )\Phi (g)}
\int _{M}e^{-(\alpha (\omega )+\varepsilon  )\varphi _{k}}
$. This proves that
$\ds\int _{M}e^{-(\alpha (\omega )+\varepsilon  )\Psi _{k}}
$ goes to infinity as $k$ goes to infinity.
Then, $\alpha _{_{\Delta _{n}}}(\omega )\leq
\alpha (\omega )+\varepsilon $.
This inequality holds for every positive $\varepsilon$,
and so $\alpha _{_{\Delta _{n}}}(\omega )\leq
\alpha (\omega )$.
\end{proof}
\subsection{Tian's invariant on a product}
For a K\"{a}hler form $\omega $ on a compact K\"{a}hler manifold $M$,
$\alpha (\omega )$ is defined as in Theorem (\ref{SecDeuxUnProp}).
\begin{proposition}\label{SecDeuxPropDeux}
Let $\bigl(M_{i}\bigr)_{1\leq i\leq n}$
be compact K\"{a}hler manifolds with metric forms $\bigl(
\omega _{i} \bigr)_{1\leq i\leq n}$. We endow the product $M_{1}\times\dots
\times M_{n}$ with the metric $\omega _{1}\oplus\dots \oplus \omega _{n}$.
Then
$\alpha (\omega _{1}\oplus\dots \oplus \omega _{n})=\ds\inf_{1\leq i\leq n}
\alpha (\omega _{i})$.
\end{proposition}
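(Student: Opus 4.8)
The plan is to induct on $n$, the crux being the two-factor identity $\alpha(\omega_1\oplus\omega_2)=\min\bigl(\alpha(\omega_1),\alpha(\omega_2)\bigr)$; the general case then follows by regarding $M_1\times\cdots\times M_n$ as $M_1\times(M_2\times\cdots\times M_n)$ and invoking the induction hypothesis on the second factor, which is again a compact K\"ahler manifold with the product metric. So fix $M=M_1\times M_2$, $\omega=\omega_1\oplus\omega_2=\pi_1^*\omega_1+\pi_2^*\omega_2$, with $\pi_i$ the projections, and note two facts I will use repeatedly. First, the volume form splits, $dV(\omega)=dV(\omega_1)\otimes dV(\omega_2)$. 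Second, for a fixed $x_2\in M_2$ the slice inclusion $j_{x_2}\colon M_1\to M$, $x_1\mapsto(x_1,x_2)$, is holomorphic with $j_{x_2}^*\omega=\omega_1$ and $j_{x_2}^*\,\partial\overline{\partial}\varphi=\partial\overline{\partial}\bigl(\varphi(\cdot,x_2)\bigr)$; since the restriction of a positive $(1,1)$-form to a complex submanifold stays positive, the slice $\varphi(\cdot,x_2)$ of any $\omega$-admissible $\varphi$ is $\omega_1$-admissible (a constant shift in $x_1$ leaves $\partial\overline{\partial}$ unchanged), and symmetrically for slices in the other factor.

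The inequality $\alpha(\omega)\le\min\bigl(\alpha(\omega_1),\alpha(\omega_2)\bigr)$ is the easy half. Given an $\omega_1$-admissible $\varphi_1$ on $M_1$ with $\sup\varphi_1\ge0$, its pullback $\pi_1^*\varphi_1$ is $\omega$-admissible with the same supremum, and the splitting of the volume gives $\int_M e^{-\alpha\pi_1^*\varphi_1}\,dV(\omega)=\mathrm{vol}(M_2)\int_{M_1}e^{-\alpha\varphi_1}\,dV(\omega_1)$. Thus any uniform bound for the exponent $\alpha$ on $M$ descends to one on $M_1$, forcing $\alpha(\omega)\le\alpha(\omega_1)$; the case of $M_2$ is symmetric.

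For the reverse inequality fix $\alpha<\min\bigl(\alpha(\omega_1),\alpha(\omega_2)\bigr)$ and choose constants $C_1,C_2$ realizing the defining bound for this exponent on $M_1$ and $M_2$ (the set of admissible exponents is an initial segment, since after normalizing $\sup\le0$ one has $e^{-\alpha\varphi}\le e^{-\beta\varphi}$ for $\alpha\le\beta$). Let $\varphi$ be $\omega$-admissible with $\sup_M\varphi\ge0$ and put $\psi(x_2)=\sup_{x_1\in M_1}\varphi(x_1,x_2)$, a continuous function on $M_2$. For each $x_2$ the slice $\varphi(\cdot,x_2)-\psi(x_2)$ is $\omega_1$-admissible with supremum $0$, so the $M_1$-bound yields $\int_{M_1}e^{-\alpha\varphi(\cdot,x_2)}\,dV(\omega_1)\le C_1\,e^{-\alpha\psi(x_2)}$, and Fubini gives $\int_M e^{-\alpha\varphi}\,dV(\omega)\le C_1\int_{M_2}e^{-\alpha\psi}\,dV(\omega_2)$.

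The remaining step, bounding $\int_{M_2}e^{-\alpha\psi}$, is where I expect the one real subtlety: the envelope $\psi$ is only continuous, not smooth, hence not itself a legitimate test function on $M_2$. Rather than regularize $\psi$, I will dominate it by a single honest slice. Let $(x_1^0,x_2^0)$ be a point where the continuous $\varphi$ attains its maximum on the compact $M$. Then $\varphi(x_1^0,\cdot)$ is a smooth $\omega_2$-admissible function whose supremum equals $\varphi(x_1^0,x_2^0)=\max_M\varphi\ge0$, so the $M_2$-bound applies to it; and since $\psi\ge\varphi(x_1^0,\cdot)$ pointwise with $\alpha>0$, we get $\int_{M_2}e^{-\alpha\psi}\,dV(\omega_2)\le\int_{M_2}e^{-\alpha\varphi(x_1^0,\cdot)}\,dV(\omega_2)\le C_2$. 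Combining, $\int_M e^{-\alpha\varphi}\,dV(\omega)\le C_1C_2$ uniformly over all such $\varphi$, so $\alpha\le\alpha(\omega)$; letting $\alpha\uparrow\min\bigl(\alpha(\omega_1),\alpha(\omega_2)\bigr)$ completes the two-factor case and hence the proposition.
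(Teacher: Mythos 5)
Your proof is correct and is essentially the paper's: induction down to two factors, pullbacks plus the product volume formula for the easy inequality, and slicing plus Fubini yielding the uniform bound $C_1C_2$ for the reverse one. The only divergence is in how the slices are normalized: where you introduce the sup-envelope $\psi$ and then dominate it by the slice through a global maximum point, the paper sidesteps the non-smooth envelope entirely by fixing one point $(u,v)$ with $\varphi(u,v)\ge 0$ and normalizing each inner slice as $m_2\mapsto\varphi(m_1,m_2)-\varphi(m_1,v)$, which vanishes at $v$ and hence has nonnegative supremum, so your extra domination step is not needed.
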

\begin{proof}
It suffices to make the proof when $n=2$, the general result will follow
by induction.
\par\medskip
(1) Suppose that $\alpha (\omega _{1})\leq \alpha (\omega _{2})$, and
let $\varepsilon >0$. There exists a sequence
$\bigl(\varphi _{n}\bigr)_{n\geq 0}$ of $\omega _{1}$-admis\-si\-ble
functions on $M_{1}$  with positive
maxima such that $\ds\int _{M_{1}}e^{-\bigl(
\alpha (\omega _{1})+\varepsilon \bigr)\varphi _{n}}$ goes to infinity
when $n$ goes to infinity. We
define $\psi _{n}$ on $M_{1}\times M_{2}$ by
$\psi _{n}(m_{1},m_{2})=\varphi _{n}(m_{1})$.
Thus $\psi _{n}$ is $(\omega _{1}
\oplus\omega _{2})$-admissible on $M_{1}\times M_{2}$, with positive
maximum, and
$
\ds\int _{M_{1}\times M_{2}}e^{-\bigl(\alpha (\omega _{1})+\varepsilon \bigr)
\psi _{n}}=V(M_{2})\int _{M_{1}}
e^{-\bigl(\alpha (\omega _{1})+\varepsilon \bigr)
\varphi _{n}}
$, so that $\ds\int _{M_{1}\times M_{2}}e^
{-\bigl(\alpha (\omega _{1})+\varepsilon \bigr)
\psi _{n}}$ goes to infinity
when $n$ goes to infinity. We have therefore $\alpha (\omega _{1}
\oplus\omega _{2})\leq \alpha (\omega _{1})+\varepsilon $. This yields
$\alpha (\omega _{1}
\oplus\omega _{2})\leq \alpha (\omega _{1}) $.
\par\medskip
(2) Let us now prove the opposite inequality. Let $\alpha $ be a real
number such that
$\alpha <\inf\bigl(\alpha (\omega _{1}),
\alpha (\omega _{2})\bigr)$ and $\varphi $ an
$(\omega _{1}\oplus\omega _{2})$-admissible function on $M_{1}\times M_{2}$.
If
$m_{2}
$ is in $ M_{2}$, the function which sends $m_{1}$ to $ \varphi (m_{1},m_{2})$
is $\omega _{1}$-admissible. The same
holds for $M_{1}$. Let $(u,v)$ in $ M_{1}\times M_{2}$ be such that
$\varphi (u,v)\geq 0$. Then
\begin{align*}
\int _{M_{1}\times M_{2}}e^{-\alpha  \varphi (m_{1},m_{2})}dV_{1}\,
dV_{2}&=\int _{M_{1}}e^{-\alpha  \varphi (m_{1},v)}
\biggl(\,
\int _{M_{2}}e^{-\alpha \bigl[\varphi (m_{1},m_{2})
-\varphi (m_{1},v)\bigr]}dV_{2}
\biggr)
dV_{1}\\
&\leq C_{2}\int _{M_{1}}e^{-\alpha\varphi (m_{1},v)}dV_{1}\leq
C_{1}\, C_{2}.
\end{align*}
Thus, $\alpha \leq \alpha \bigl(\omega _{1}\oplus\omega _{2}\bigr)$ and
we get $\inf\bigl(\alpha (\omega _{1}),\alpha (\omega _{2})\bigr)
\leq \alpha \bigl(\omega _{1}\oplus\omega _{2}\bigr)$.
\end{proof}
\subsection{Tian's invariant on $\gpqc$}
Since there is a natural duality isomorphism between
$\gpqc$ and $G_{q,p}(\C \, )$, we can assume that $p\leq q$ without
loss of generality.
\subsubsection{Imbedding of $\bigl\{\prc{1}\bigr\}^{p}$
into $\gpqc$ when $p\leq q$}
For $w$ in $\C^{\, p(q-1)}$,
$w=\bigl(w_{i,j}\bigr)_{_{{
1\leq i\leq q\atop 1\leq j\leq p}\atop i\not = j}}$, we
define the map $\ti\rho_{w}$ from $ \Bigl\{\!\
\C^{2}\setminus(0,0)\!\Bigr\}^{p}$ to $ \ma{p+q}{p}$
by $$\ti\rho _{w}\Bigl((\lambda _{i},\mu _{i})_{1\leq i\leq p}\Bigr)
=\left\{\!\!
\begin{array}{lcl}
\lambda _{i}\, \delta _{ij}&\ \mbox{if}\ &i\leq p\\
w_{i-p,j}\, \lambda _{j}&\ \mbox{if}\ &i>p\ \mbox{and}\ i\not = j+p\\
\mu _{i}&\ \mbox{if}\ &i>p\ \mbox{and}\ i = j+p
\end{array}
\right.$$
We make, for $p+1\leq i\leq p+q $, the following row transformations:
$L_{i}\leftarrow L_{i}-\ds\sum _{1\leq j\leq p\atop i\not = j+p}
w_{i-p,j}\, L_{j}$. We get a matrix $\bigl(c_{i j}\bigr)_{_{
1\leq i\leq p+q\atop 1\leq j\leq p}}$ with $c_{i j}=\delta _{i j}\,
\lambda _{i}$ if $1\leq i\leq p$ and $c_{i j}=\delta _{i-p,j}\, {\mu _{j}}$
if $p+1\leq i\leq p+q$, which has rank $p$. $\ti\rho _{w}$ induces a map
from $\bigl\{\prc{1}\bigr\}^{p}$ into $\gpqc$ as shown on the following
diagram, where $\gamma $ is the projection of the principal fiber bundle
$\Bigl\{\!\C^{2}\setminus(0,0)\!\Bigr\}^{p}$ onto
$\bigl\{\prc{1}\bigr\}^{p}$. Remark that
$\ti\rho _{w}$ sends $[0,1]\times\dots \times [0,1]
$ onto $\pi (A)$, where
$m_{\{p+1,\dots ,2p\}}(A)=I^{(p)}$ and
$m_{\{p+1,\dots ,2p\}^{c}}(A)=0^{(q\times
p)}$.
\par\bigskip
\hspace*{5 cm}\xymatrix{
\bigl\{\C^{2}\setminus (0,0)\bigr\}^{p}\ar[r]^{\ti\rho _{w}}
\ar[d]_{\gamma  }&\mpq\ar[d]^{\pi }\\
\bigl\{\prc{1}\bigr\}^{p}\ar
[r]_{\rho _{w}}&
\gpqc
}
\par
We have
\begin{align*}
\bigl(\pi \circ\ti\rho _{w}\bigr)^{*}\bigl(\gpq\bigr)&=
i\, \dd\, \log \Bigl(\det\bigl(^{t}\!\ti\rho _{w}\,
\ba{\ti\rho }_{w}\bigr)\Bigr)\\
&=i\, \dd\, \log\Biggl( \dfrac {\det\,
\bigl(^{t}\!\ti\rho _{w}\, \ba{\ti\rho }_{w}\bigr)}
{\ds\prod_{k=1}^{p}\bigl(|\lambda _{k}|^{2}+|\mu _{k}|^{2}\bigl)}\Biggr)
+\sum _{k=1}^{p}i\, \dd\, \log \bigl(
|\lambda _{k}|^{2}+|\mu _{k}|^{2}
\bigr)\\
&=i\, \dd\, \log \ti{\Phi }+\gamma ^{*}\bigl(FS_{1}\oplus\dots \oplus
FS_{1}\bigr),
\end{align*}
where $FS_{1}$ is the Fubini-Study metric on $\prc{1}$.
$\ti{\Phi }$ is
invariant by the action of the structural group $\C^{*}\times\dots \times
\C^{*}$, so it induces a map $\Phi $ from $\bigl\{\prc{1}\bigr\}^{p}$ into
$\C$. Note that $\Phi \bigl([0,1]\times\dots \times [0,1]\bigr)=1$. Then
$\bigl(\pi \circ\ti\rho _{w}\bigr)^{*}\bigl(\gpq\bigr)=
\pi ^{*}\bigl(i\, \dd\, \log \Phi +FS_{1}\oplus\dots \oplus
FS_{1}\bigr)$, so that
$\rho_{w} ^{*}\bigl(\gpq\bigr)=
i\, \dd\, \log \Phi +FS_{1}\oplus\dots \oplus
FS_{1}$.
\subsubsection{Lower bound of $\alpha (\gpq)$}
For $I$ in $\mathcal{I}$, we define $P_{I}$ by $\mm{I}(P_{I})=
I^{(p)}$ and $\mm{{I^{c}}}(P_{I})=0^{(q\times p)}$. If
$n={p+q\choose p}$, we set $\Delta _{n}=\bigl\{P_{I}\bigr\}
_{I\in \mathcal{I}}$. Since $\upq$ is a transitive group of
holomorphic isometries
of $\gpqc$, we know from proposition (\ref{SecDeuxUnProp}), that
$\alpha \bigl(\gpq\bigr)=\alpha _{_{\Delta _{n}}}\bigl(\gpq\bigr)$.
We set $I=\{p+1,\dots ,2p\}$. Let $\varphi $ be an admissible function
on $\gpqc$, nonnegative on $\Delta _{n}$. The last equality of
the precedent section shows that the function
$\varphi \circ\rho _{w}+\log \Phi $ is
$\bigl(FS_{1}\oplus\dots
\oplus FS_{1}\bigr)$-admissible for every $w$ in $\C^{\, p(q-1)}$.
Furthermore,
$\bigl(\varphi \circ\rho _{w}+\log \Phi \bigr)$ sends $
[0,1]\times\dots \times [0,1]$ to
the nonnegative number $\varphi (P_{_I})$. It is
known that $\alpha (FS_{1})=1$ (see \cite{Aub6}). Proposition (\ref
{SecDeuxPropDeux}) yields $\alpha \bigl(FS_{1}\oplus\dots \oplus
FS_{1}\bigr)=1$.
\par
Let $\alpha $ be a real number such that
$\alpha <1$. There exists a constant $C$,
independent of $\varphi $, such that
$\ds\int _{{\bigl\{\prc{1}\bigr\}^{p}}}e^{-\alpha \varphi \circ\rho _{w}}\, \Phi
^{-\alpha }
\leq C$.
We define the map $F_{I}$ from $\pi (U_{I})$ to $\R_{+}$ by
$F_{I}(Z_{I})=\det\, \bigl(Id+^{t}\!Z_{I}\, \ba{Z}_{I}\bigr)$. On
$\bigl\{\prc{1}\bigr\}^{p}$, we work with the coordinates
$\mu _{1},\dots ,\mu_{p}$ in the chart $\lambda _{1}=\dots =\lambda
_{p}=1$. Thus
$$
\Phi (\mu )=\dfrac {F_{I}\circ \rho _{w}(\mu )}
{\ds\prod_{k=1}^{p}\bigl(1+|\mu _{k}|^{2}\bigr)},\quad \mbox{so that}\quad
\ds\int _{\mu \in \C^{\, p}}
e^{-\alpha  \varphi \circ\rho _{w}(\mu)}\,
\dfrac {dV_{\mu }\bigl(\C^{\, p}\bigr)}
{\ds\prod_{k=1}^{p}\bigl(1+|\mu _{k}|^{2}\bigr)^{2-\alpha }\,
\bigl(F_{_I}\circ\rho _{w}(\mu )\bigr)^{\alpha }}\leq C.
$$
We have the inequality $\ds\sum _{i=1}^{q}
\sum _{j=1}^{p}\bigl|Z_{i j}\bigr|^{2}\leq F_{I}\bigl(P_{I}\bigr)$.
In particular, for every $k$ in $\{1,\dots ,p\}$,
$1+|\mu _{k}|^{2}\leq F_{I}\circ\rho _{w}(\mu )$, and
$f_{I}\circ\rho _{w}(\mu )\geq
1+\ds\sum _{{1\leq i\leq q\atop 1\leq j\leq p}\atop
i\not = j}\bigl |w_{i j}\bigr |^{2}$.
Thus,
for $\kappa >0$ and $w\in\C^{\, p(q-1)}$,
$$
\dfrac {\ds\prod_{k=1}^{p}\bigl(1+|\mu _{k}|^{2}\bigr)^{2-\alpha }}
{\bigl(F_{I}\circ\rho _{w}(\mu )\bigr)^{\kappa +p+q-\alpha }}\leq
\dfrac {1}{\bigl(F_{_I}\circ\rho _{w}(\mu )\bigr)^{\kappa -p+q+\alpha (p-1)}}
\leq \dfrac {1}{\Bigl(1+\ds\sum _{{1\leq i\leq q\atop 1\leq j\leq p}\atop
i\not = j}\bigl |w_{i j}\bigr |^{2}\Bigr)^{\kappa }}=\dfrac {1}{\Bigl(
1+\bigl\|w\bigr\|^{2}\Bigr)^{\kappa }}\cdot
$$
We have, according to Proposition (\ref{SecUnTheoUn}),
\begin{align*}
&\int _{\pi (U_{I})}
\dfrac {e^{-\alpha \varphi }}{F_{I}^{\kappa }}=\int _{w\in\C^{\, p(q-1)}}
\int _{\mu \in\C^{\, p}}
\dfrac {e^{-\alpha  \varphi \circ\rho _{w}(\mu )}}
{\bigl(F_{I}\circ\rho _{w}(\mu )\bigr)^{\kappa +p+q}}\,
dV_{\mu }\bigl(\C^{\, p}\bigr)
\, dV_{w}\bigl(\C^{\, p(q-1)}\bigr)\\[2 ex]
&\quad =\int _{w\in\C^{\, p(q-1)}}\int _{\mu \in\C^{\, p}}
\Biggl(\dfrac {e^{-\alpha  \varphi \circ\rho _{w}(\mu )}}
{\ds\prod_{k=1}^{p}\bigl(1+|\mu _{k}|^{2}\bigr)^{2-\alpha }\,
\bigl(F_{I}\circ\rho _{w}(\mu )\bigr)^{\alpha }}\Biggr)\\[-2.4 ex]
&\hspace*{5 cm}\times\dfrac {\ds\prod_{k=1}^{p}
\bigl(1+|\mu _{k}|^{2}\bigr)^{2-\alpha }}
{\bigl(F_{I}\circ\rho _{w}(\mu )\bigr)^{\kappa +p+q-\alpha }}
\, dV_{\mu }\bigl(\C^{\, p}\bigr)
\, dV_{w}\bigl(\C^{\, p(q-1)}\bigr)\\[2ex]
&\quad =\int _{w\in\C^{\, p(q-1)}}\Biggl(
\int _{\mu \in\C^{\, p}}
\dfrac {e^{-\alpha  \varphi \circ\rho _{w}(\mu )}}
{\ds\prod_{k=1}^{p}\bigl(1+|\mu _{k}|^{2}\bigr)^{2-\alpha }\,
\bigl(F_{_I}\circ\rho _{w}(\mu )\bigr)^{\alpha }}
\, dV_{\mu }\bigl(\C^{\, p}\bigr)
\Biggr)\, \times \dfrac {dV_{w}\bigl(\C^{\, p(q-1)}\bigr)}
{\bigl(1+\bigl\|w\bigr\|^{2}\bigr)^{\kappa }}\\[2ex]
&\quad \leq C\int  _{w\in\C^{\, p(q-1)}}
\dfrac {dV_{w}\bigl(\C^{\, p(q-1)}\bigr)}
{\bigl(1+\bigl \|w\bigr \|^{2}\bigr)^{\kappa }}
\leq C' \qquad \mbox{if}\ \kappa >p(q-1).
\end{align*}
Thus, we obtain that for all $I$ in $\mathcal{I}$,
$\ds\int_{\pi (U_{I})}
\dfrac {e^{-\alpha  \varphi }}{F_{I}^{\kappa }}\leq C $, where $C$ is
independent of $\varphi $.
\par
Since $\gpqc$ is compact, there exists a
family $\bigl(V_{I}\bigr)_{I\in \mathcal{I}}$ of open sets of $\gpqc$
such that $V_{I}$ is relatively compact in $ \pi (U_{I}) $ for every
$I\in\mathcal{I}$, and $\bigcup_{I\in\,\mathcal{I}}V_{I}=\gpqc$. There
exists $M>0$ such that $F_{I}\leq M$ on
$V_{I}$ for every $I\in\mathcal{I}$. Thus
$$
\int _{\gpqc}e^{-\alpha \varphi }\leq \sum _{I\in\,\mathcal{I}}
\int _{V_{I}}e^{-\alpha \varphi }\leq
\sum _{I\in\,\mathcal{I}}M^{\kappa }
\int _{V_{I}}\dfrac {e^{-\alpha \varphi }}
{F_{I}^{\kappa }}\\
\leq M^{\kappa } \sum _{I\in\,\mathcal{I}}
\int _{\pi (U_{I})}\dfrac {e^{-\alpha \varphi }}
{F_{I}^{\kappa }}\leq C\, M^{\kappa }\, {p+q\choose p}.
$$
We deduce that $\alpha \bigl(\gpq\bigr)\geq 1$.
\subsubsection{Upper bound of $\alpha (\gpq)$}
We use here a method which can be found in \cite{Re} for the
complex projective space.
Let $I$ in $ \mathcal{I}$. We define $\ti{K}$ from  $\mpq$
to $\prr{1}$ by the
relation
$\ti{K}(M)=\bigl[\, |\det\, \mm{_I}(H)\, |^{2},\det \, ^{t}\!M\ba{M}\, \bigr]$.
$\ti{K}$ is invariant by the action of the structural group $G_{p}(\C\, )$,
so it induces a $C^{\infty }$ map $K$ from $\gpqc$ to $\prr{1}$.
Remark that $\psi =\log K$ is a K\"ahler potential on $U_{I}$ for the
metric $\gpq$.
\begin{lemma}\label{SecTroisLemmeUn}
There exists a decreasing sequence $\bigl({\varphi _{n}}\bigr)_{n\geq 0}$ of
admissible functions with positive maxima which converges pointwise to $-\psi $
on $\pi \bigl(U_{I}\bigr)$.
\end{lemma}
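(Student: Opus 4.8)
The plan is to write the sequence down explicitly and reduce everything to one plurisubharmonicity check. First I would introduce the globally defined function $h$ on $\gpqc$ by $h=|\det\mm{I}(M)|^{2}/\det\bigl(\,^{t}\!M\,\ba{M}\,\bigr)$; this is independent of the representative $M$ in $\mpq$ since numerator and denominator are both multiplied by $|\det G|^{2}$ under $M\mapsto MG$, $G\in Gl_{p}(\C\,)$, so $h$ descends to a $\cinf$ function $\gpqc\to[0,1]$. On $\pi(U_{I})$ one has $h=1/F_{I}$, where $F_{I}(Z_{I})=\det(\id+{}^{t}Z_{I}\,\ba{Z}_{I})$, while $h$ vanishes exactly on $\gpqc\setminus\pi(U_{I})$, the zero locus of the Pl\"ucker coordinate $\det\mm{I}$. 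Since $\gpq=i\,\dd\,\log F_{I}$ on $\pi(U_{I})$, the potential satisfies $e^{\psi}=F_{I}$, hence $-\psi=\log h$ there. I then set $\varphi_{n}=\log\!\bigl(h+\tfrac1n\bigr)$ for $n\ge 1$.

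The elementary properties are then immediate. Each $\varphi_{n}$ is $\cinf$ on all of $\gpqc$ because $h+\tfrac1n\ge\tfrac1n>0$; the sequence is decreasing because $h+\tfrac1n$ decreases with $n$; and for every $x\in\pi(U_{I})$ we have $h(x)>0$, so $\varphi_{n}(x)\to\log h(x)=-\psi(x)$, which is exactly pointwise convergence to $-\psi$ on $\pi(U_{I})$ (with limit $-\infty=\log h$ on the complement). Finally $h(P_{I})=1/F_{I}(0)=1$, so $\varphi_{n}(P_{I})=\log(1+\tfrac1n)>0$ and each $\varphi_{n}$ has a positive maximum.

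The only delicate point, and the step I expect to be the main obstacle, is admissibility: $\gpq+i\,\dd\,\varphi_{n}>0$ on all of $\gpqc$, \emph{including across the divisor} $\{h=0\}$, where $\log h$ is singular. I would argue chart by chart. On a coordinate set $U_{J}$ ($J\in\ii$) write $F_{J}=\det(\id+{}^{t}Z_{J}\,\ba{Z}_{J})$, so that $\gpq=i\,\dd\,\log F_{J}$, and let $g_{J}(Z_{J})=\det\mm{I}\bigl(\varphi_{J}^{-1}(Z_{J})\bigr)$, a holomorphic function with $h=|g_{J}|^{2}/F_{J}$. A direct computation gives $\gpq+i\,\dd\,\varphi_{n}=i\,\dd\,\log\bigl(n\,|g_{J}|^{2}+F_{J}\bigr)$, a smooth form even where $g_{J}=0$. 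To see this form is positive I invoke the regularized-maximum principle: the function $(a,b)\mapsto\log(e^{a}+e^{b})$ is convex and nondecreasing in each variable, so $\log(e^{a}+e^{b})$ is plurisubharmonic as soon as $a,b$ are, and strictly so if one of them is. Applying this with $a=\log n+\log|g_{J}|^{2}$ (plurisubharmonic, since $g_{J}$ is holomorphic) and $b=\log F_{J}$ (strictly plurisubharmonic, being a local K\"ahler potential for $\gpq$) shows that $\log\bigl(n|g_{J}|^{2}+F_{J}\bigr)$ is strictly plurisubharmonic, whence $\gpq+i\,\dd\,\varphi_{n}>0$. The difficulty is precisely that the naive potential $-\psi=\log F_{I}$ is singular along $\gpqc\setminus\pi(U_{I})$; it is the passage to the everywhere-finite global function $h$, combined with the regularized-maximum argument, that forces positivity of the approximating forms across that locus.
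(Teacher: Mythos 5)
Your proof is correct, but it takes a genuinely different route from the paper's. The paper works entirely with the potential $\psi$ inside $\pi(U_I)$: it constructs a decreasing sequence of smooth convex functions $f_n$ on $\R_+$ with $1+f_n'>0$, equal to $-(1-1/n)x$ on $[0,n]$ and to the constant $-n$ on $[2n,+\infty)$, sets $\varphi_n=f_n\circ\psi$ on $\pi(U_I)$, and extends by the constant $-n$ on a neighborhood of $\pi(U_I)^{c}$. Admissibility then follows from
\[
\gpq+i\,\dd\,\varphi_n=(1+f_n'\circ\psi)\,i\,\dd\,\psi+i\,\bigl(f_n''\circ\psi\bigr)\,\partial\psi\wedge\ba{\partial}\psi ,
\]
a positive definite form plus a rank-one positive semidefinite one, and it costs nothing near $\pi(U_I)^{c}$, where $\varphi_n$ is locally constant. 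You instead globalize the potential first: your $h$ is $e^{-\psi}$ extended by $0$, and the single closed formula $\varphi_n=\log(h+1/n)$ replaces the paper's cut-off-and-patch construction. This buys immediate smoothness, monotonicity and pointwise convergence, strictly positive maxima $\log(1+1/n)$ (the paper's own functions have maximum exactly $0$), and it concentrates the whole lemma into one inequality, $i\,\dd\,\log\bigl(n|g_J|^2+F_J\bigr)>0$ on each chart. The price is that positivity must now be checked across the divisor $\{\det\mm{I}=0\}$ --- precisely the locus where the paper's construction is trivially admissible.

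On that one point your justification should be tightened. The principle you invoke --- $\chi$ convex and nondecreasing in each variable, $a$ plurisubharmonic, $b$ strictly plurisubharmonic, hence $\chi(a,b)$ strictly plurisubharmonic --- is false in that generality (take $\chi(a,b)=a$, or $\chi=\max$); for $\chi(a,b)=\log(e^a+e^b)$ it does hold where $a$ and $b$ are finite, because $\partial\chi/\partial b>0$ there, but at the zeros of $g_J$ one has $a=-\infty$ and the chain-rule argument degenerates. The clean repair is a direct estimate with no exceptional set: put $S=n|g_J|^2+F_J$ and apply, for each tangent vector $v$ (writing $\partial_v$ for the derivative along $v$), the Cauchy--Schwarz inequality
$\bigl|n\,\ba{g}_J\,\partial_v g_J+\partial_v F_J\bigr|^2\le S\,\bigl(n|\partial_v g_J|^2+|\partial_v F_J|^2/F_J\bigr)$;
this yields pointwise on all of $\pi(U_J)$, including at $g_J=0$,
\[
i\,\dd\,\log S\;\ge\;\frac{F_J}{S}\;i\,\dd\,\log F_J\;=\;\frac{F_J}{S}\,\gpq\;>\;0 .
\]
With this substitution your argument is complete.
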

\begin{proof}
We construct a decreasing sequence $\bigl(f_{n}\bigr)_{n\geq 0}$ of $C^{\infty
}$ convex functions on $\R_{+}$ satisfying the conditions $1+f'_{n}>0$,
$f_{n}(x)=-\bigl(1-1/n\bigr)x$ for $x$ in $[0,n]$ and $f_{n}(x)=-n $ for
$x\geq 2n$.
Let $y$ be an element of  $\pi (U_{I})^{c}$
and $\Omega _{n}$ the set of the elements $x$ in $\pi (U_{I})$ such that
$\psi (x)>2n$.
Since
$F_{I}(y)=[0,1]$, there exists a neighborhood $V$ of y such that the
inequality $z>e^{2n}$ holds for every point
$[1,z]$ in $ F_{I}(V)$. Thus $V\cap\pi \bigl(U_{I}\bigr)
$ is included in $\Omega _{n}$. We have proved that
$W_{n}=\Omega _{n}\cup\pi \bigl(U_{I}\bigr)^{c}$,
so that $W_{n}$ is an open neighborhood
of $\pi \bigl(U_{I}\bigr)^{c}$. We define
$\varphi _{n}$ by $\varphi _{n}=f_{n}\circ \psi $ on $\pi \bigl(U_{I}\bigr)$
and $\varphi _{n}=-n$ on $W_{n}$.
Thus $\varphi _{n}$ is well defined and $\varphi _{n}(0)=0$. It remains
to show that $\varphi _{n}$ is admissible on $\pi
\bigl(U_{I}\bigr)$. We have
$$
\Bigl(\gpq+i\, \dd\, \varphi _{n}\Bigr)_{\lambda \,  \ba{\mu }}
=\partial _{\lambda \,  \ba{\mu }}\psi +
\partial_{\lambda } \bigl(f'_{n}\circ \psi \bigr)\partial_{\ba{\mu }}
\psi  =
\bigl(1+f'_{n}\circ \psi \bigr)\partial_{\lambda \,  \ba{\mu }}
\psi +f'' _{n}\circ \psi  \, \partial_{\lambda  }\psi \,
\partial _{\, \ba{\mu }}\psi .
$$
Hence the matrix of the metric $\gpq+i\, \dd\, \varphi _{n}$ is
of the form $A+T$ where $A$ is positive definite and $T$
has rank one and positive trace. So $A+T$ is positive definite and we get
the result.
\end{proof}
\begin{lemma}\label{SecTroisLemmeDeux}
Let $n$ in  $\N^{*}$ and $r$ a positive real number. Then
$$\ds\int _{||X||\leq r}
\dfrac {dV_{X}(M_{n}(\C))}
{\bigl |\det X\bigr |^{2 }}=+\infty .$$
\end{lemma}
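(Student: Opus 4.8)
The plan is to exploit the fact that the singularity of $1/|\det X|^2$ along the hypersurface $\{\det X=0\}$ is, in the transverse complex direction, exactly the non-integrable singularity $1/|w|^2$ of the one-dimensional case; since the integrand is nonnegative, Tonelli's theorem will let me descend to that one-variable situation. First I would identify $M_n(\C)$ with $(\C^n)^n$ by writing a matrix through its columns, $X=(x_1,\dots,x_n)$, so that $dV_X$ becomes the product Lebesgue measure $dV(x_1)\,dV(x_2)\cdots dV(x_n)$. Because all norms on the finite-dimensional space $M_n(\C)$ are equivalent, the ball $\{\|X\|\le r\}$ contains a product of balls $B=\{\|x_1\|\le\delta\}\times\cdots\times\{\|x_n\|\le\delta\}$ for some $\delta>0$; as the integrand is nonnegative it suffices to prove the divergence over $B$.

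Next I would fix the last $n-1$ columns $y=(x_2,\dots,x_n)$ and regard $\det X$ as a function of $x_1$ alone. Cofactor expansion along the first column gives $\det X=\ell_y(x_1)$, where $\ell_y$ is the $\C$-linear form whose coefficients are the cofactors $C_{i1}$, that is, up to sign, the maximal $(n-1)\times(n-1)$ minors of the $n\times(n-1)$ matrix $(x_2,\dots,x_n)$. These cofactors fail to vanish simultaneously precisely when $x_2,\dots,x_n$ are linearly independent, which holds for $y$ in a set $E$ of full measure in $(\C^n)^{n-1}$; for such $y$ the form $\ell_y$ is not identically zero.

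The key step is then to show that for every $y\in E$ the inner integral $\ds\int_{\|x_1\|\le\delta}\frac{dV(x_1)}{|\ell_y(x_1)|^2}$ is infinite. Since $\ell_y\not\equiv 0$, a constant-Jacobian $\C$-linear change of coordinates on $x_1$ turns $\ell_y(x_1)$ into a single coordinate $w$, the remaining coordinates forming $t\in\C^{n-1}$; the image of the ball is an open set containing the origin, hence it contains a polydisc $\{|w|<\varepsilon\}\times\{|t|<\varepsilon\}$, on which
$$\int\frac{dV_w\,dV_t}{|w|^2}\ \ge\ \mathrm{vol}\bigl(\{|t|<\varepsilon\}\bigr)\int_{|w|<\varepsilon}\frac{dV_w}{|w|^2}\ =\ \mathrm{vol}\bigl(\{|t|<\varepsilon\}\bigr)\cdot 2\pi\int_0^{\varepsilon}\frac{d\rho}{\rho}\ =\ +\infty,$$
using polar coordinates $w=\rho e^{i\theta}$ to see that the one-variable integral already diverges logarithmically. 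Thus the inner integral equals $+\infty$ for every $y\in E$, and since $E$ has full measure and the integrand is nonnegative, Tonelli's theorem yields
$$\int_B\frac{dV_X}{|\det X|^2}\ =\ \int_{(x_2,\dots,x_n)}\left(\int_{x_1}\frac{dV(x_1)}{|\ell_y(x_1)|^2}\right)dV(y)\ =\ +\infty,$$
whence a fortiori the integral over $\{\|X\|\le r\}$ is infinite.

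I expect the main obstacle to be the bookkeeping in this key step: one must check that the fibre of the linear change of coordinates over a generic transverse slice genuinely contains a full disc around $w=0$, so that the one-dimensional divergence is actually triggered rather than merely establishing that the integrand is large near the singular locus. Everything else — the reduction to a product ball, the cofactor identity, and the final application of Tonelli — is routine once nonnegativity of the integrand is invoked.
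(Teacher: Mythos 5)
Your proof is correct, and it takes a genuinely different route from the paper's. The paper argues by dyadic scaling: it decomposes the ball into the annuli $\{r/2^{k+1}\le \|X\|\le r/2^{k}\}$, substitutes $Y=2^{k}X$, asserts that every annulus contributes the same positive amount, and concludes that the series diverges --- so the divergence is attributed to the point singularity at $X=0$. You instead attribute the divergence to the whole hypersurface $\{\det X=0\}$: freezing the last $n-1$ columns, $\det X$ becomes a not-identically-zero linear form $\ell_{y}(x_{1})$ for almost every $y$, the transverse singularity $1/|w|^{2}$ is non-integrable in one complex variable, and Tonelli finishes the job. Your route is in fact the more robust of the two. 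Under $Y=2^{k}X$ the volume element contributes a factor $2^{-2kn^{2}}$ while $|\det X|^{-2}$ contributes $2^{2kn}$, so the annulus integrals are genuinely scale-invariant only when $n=1$; for $n\ge 2$ the correct factor $2^{-2kn(n-1)}$ decays geometrically, and the paper's chain of equalities holds only in the trivial sense that each term is already $+\infty$ --- which is essentially what the lemma asks one to prove, since the divergence must then occur inside a single annulus. Your slicing argument exhibits exactly that local divergence (in any open set meeting $\{\det X=0\}$), works uniformly in $n$, and needs no normalization at the origin; the only point requiring care, as you note, is that the linear change of variables on $x_{1}$ has constant nonzero Jacobian and maps the ball onto a neighborhood of $0$, which you have handled.
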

\begin{proof}
We can write
$$
\ds\int _{||X||\leq r}
\dfrac {dV_{X}(M_{n}(\C))}
{\bigl |\det X\bigr |^{2 }}=\sum _{k=0}^{\infty }
\ds\, \, \, \int _{{r}/{2^{k+1}}
\leq ||X|| \leq r/2^{k}}
\dfrac {dV_{X}(M_{n}(\C))}
{\bigl |\det X\bigr |^{2 }}\cdot
$$
We put $Y=2^{k}X$, so
$$
\int _{
r/2^{k+1}\leq ||X|| \leq r/2^{k}}
\dfrac {dV_{X}(M_{n}(\C))}
{\bigl |\det X\bigr |^{2 }}=
\int _{
1/2\leq ||Y|| \leq 1}
\dfrac {dV_{Y}(M_{n}(\C))}
{\bigl |\det Y\bigr |^{2 }}\cdot
$$
The terms in the series are strictly positive and independent of $k$.
The sum is therefore infinite.
\end{proof}
We can now prove that
$\alpha \bigl(\gpq\bigr)$ is upper bounded by $1$.
Suppose that $\alpha \bigl(\gpq\bigr)>1$. Then there exists a positive $C$
such that for every integer $n$,
$\ds\int _{\pi (U_{I})}e^{-\varphi _{n}}\leq C$.
Using Lemma (\ref{SecTroisLemmeUn}) and monotonous convergence,
$\ds\int _{\pi (U_{I})}F_{I}\leq C$.
Since
$\pi(U_{I})^{c}$ has zero measure, $\ds\int _{\gpqc}F_{I}\leq C$. Let
$\ti{I}$ in $\mathcal{I}$ be such that $I\cap\ti{I}=\emptyset$ (this is possible
since $p\leq q$). We have $P_{ {\ti{I}}}\bigl\{\mm{I}\,
\bigl(P_{{\ti{I}} }\bigr)\bigr\}^{-1}=P_{I}$. Remark that
$\mm{I}\bigl(P_{{\ti{I}}}\bigr)=
\mm{I}\bigl(Z_{{\ti{I}}}\bigr)$. Thus
$\det\bigl(Id+^{t}\!Z_{I}\, \ba{Z}_{I}\bigr)=\det\bigl(^{t}\!
P_{\ti{I}}\, \ba{P}_{{\ti{I}}}\bigr)\,
\bigl |\det\mm{I}\bigl(Z_{{\ti{I}}}\bigr)\bigr |^{-2}$.
For $\bigl \|Z_{{\ti{I}}}\bigr \|\leq r$,
$\det\bigl(^{t}\!
P_{\ti{I}}\, \ba{P}_{{\ti{I}}}\bigr)\leq M$, so that
$
\ds\int_
{||Z_{\ti{I}}||\leq r}
\dfrac {dV_{Z_{\ti{I}}}(\ma{q}{p})}
{\bigl |\det \mm{I}\bigl(Z_{
{\ti{I}}}\bigr)\bigr |^{2 }}<+\infty
$. Integrating over the remaining variables
$\bigl(Z_{ij}\bigr)_{i\in\widetilde{I}^{c}\cap I^{c}}$
yields
$
\ds\int _{||Z||\leq r}
\dfrac {dV_{Z}(M_{p}(\C))}
{\bigl |\det Z\bigr |^{2 }}<+\infty
$, which is in contradiction with the result of Lemma
(\ref{SecTroisLemmeDeux}).
Thus we obtain $\alpha \bigl(\gpq\bigr)\leq 1$.

\end{document}